\numberwithin{equation}{section}
\theoremstyle{plain}
\newtheorem{thm}{Theorem}[section]
\newtheorem{prop}[thm]{Proposition}
\newtheorem{cor}[thm]{Corollary}
\newtheorem{lem}[thm]{Lemma}
\theoremstyle{definition}
\newtheorem{rem}[thm]{Remark}
\newtheorem{defi}[thm]{Definition}
\newcommand{\R}{\mathbb{R}}
\newcommand{\C}{\mathbb{C}}
\newcommand{\im}{\text{\normalfont Im}}
\newcommand\supp{{\rm supp}}
\newcommand{\FC}{\mathcal{C}^\boxplus}
\newcommand{\dd}{{\rm d}} 
\newcommand{\ii}{{\rm i}}  
\newcommand\ep{\varepsilon} 
\begin{document}
\title{On free generalized inverse gaussian distributions}

\author{Takahiro Hasebe\\Department of Mathematics,\\ Hokkaido University \\thasebe@math.sci.hokudai.ac.jp \and
	Kamil  Szpojankowski \\ Faculty of Mathematics and Information Science\\ Warsaw University of Technology\\
	k.szpojankowski@mini.pw.edu.pl}

\date{\today}

\maketitle

\begin{abstract}
	We study here properties of {\it free Generalized Inverse Gaussian distributions} (fGIG) in free probability. We show that in many cases the fGIG shares similar properties with the classical GIG distribution. In particular we prove that fGIG is freely infinitely divisible, free regular and unimodal, and moreover we determine which distributions in this class are freely selfdecomposable. In the second part of the paper we prove that for free random variables $X,Y$ where $Y$ has a free Poisson distribution one has $X\stackrel{d}{=}\frac{1}{X+Y}$ if and only if $X$ has fGIG distribution for special choice of parameters. We also point out that the free GIG distribution maximizes the same free entropy functional as the classical GIG does for the classical entropy.
\end{abstract}

\section{Introduction}


Free probability was introduced by Voiculescu in \cite{Voi85} as a non-commutative probability theory where one defines a new notion of independence, so called freeness or free independence. Non-commutative probability is a counterpart of the classical probability theory where one allows random variables to be non-commutative objects. Instead of defining a probability space as a triplet $(\Omega,\mathcal{F},\mathbb{P})$ we switch to a pair $(\mathcal{A},\varphi)$ where $\mathcal{A}$ is an algebra of random variables and $\varphi\colon\mathcal{A}\to\mathbb{C}$ is a linear functional, in classical situation $\varphi=\mathbb{E}$. It is natural then to consider algebras $\mathcal{A}$ where random variables do not commute (for example $C^*$ or $W^*$--algebras). For bounded random variables independence can be equivalently understood as a rule of calculating mixed moments. It turns out that while for commuting random variables only one such rule leads to a meaningful notion of independence, the non-commutative setting is richer and one can consider several notions of independence. Free independence seems to be the one which is the most important. The precise definition of freeness is stated in Section 2 below.


Free probability emerged from questions related to operator algebras however the development of this theory showed that it is surprisingly closely related with the classical probability theory. First evidence of such relations appeared with Voiculescu's results about asymptotic freeness of random matrices. Asymptotic freeness roughly speaking states that (classically) independent, unitarily invariant random matrices, when size goes to infinity, become free. \\
Another link between free and classical probability goes via infinite divisibility. With a notion of independence in hand one can consider a convolution of probability measures related to this notion. For free independence such operation is called free convolution  and it is denoted by $\boxplus$. More precisely for free random variables $X,Y$ with respective distributions $\mu,\nu$ the distribution of the sum $X+Y$ is called the free convolution of $\mu$ and $\nu$ and is denoted by $\mu\boxplus\nu$. The next natural step is to ask which probability measures are infinitely divisible with respect to this convolution. We say that $\mu$ is freely infinitely divisible if for any $n \geq 1$ there exists a probability measure $\mu_n$ such that
\[
\mu=\underbrace{\mu_n\boxplus\ldots\boxplus\mu_n}_{\text{$n$ times}}.
\]
Here we come across another striking relation between free and classical probability: there exists a bijection between classically and freely infinitely divisible probability measures, this bijection was found in \cite{BP99} and it is called Bercovici-Pata (BP) bijection. This bijection has number of interesting properties, for example measures in bijection have the same domains of attraction. In free probability literature it is standard approach to look for the free counterpart of a classical distribution via BP bijection. For example Wigner's semicircle law plays the role of the Gaussian law in free analogue of Central Limit Theorem, Marchenko-Pastur distribution appears in the limit of free version of Poisson limit theorem and is often called free Poisson distribution.

While BP bijection proved to be a powerful tool, it does not preserve all good properties of distributions. Consider for example Lukacs theorem which says that for classically independent random variables $X,Y$ random variables $X+Y$ and $X/(X+Y)$ are independent if and only if $X,Y$ have gamma distribution with the same scale parameter \cite{Luk55}. One can consider similar problem in free probability and gets the following result (see \cite{Szp15,Szp16}) for free random variables $X,Y$ random variables $X+Y$ and $(X+Y)^{-1/2}X(X+Y)^{-1/2}$ are free if and only if $X,Y$ have Marchenko-Pastur (free Poisson) distribution with the same rate. From this example one can see our point - it is not the image under BP bijection of the Gamma distribution (studied in \cite{PAS08,HT14}), which has the Lukacs independence property in free probability, but in this context free Poisson distribution plays the role of the classical Gamma distribution.


In \cite{Szp17} another free independence property was studied -- a free version of so called Matsumoto-Yor property (see \cite{MY01,LW00}). In classical probability this property says that for independent $X,Y$ random variables $1/(X+Y)$ and $1/X-1/(X+Y)$ are independent if and only if $X$ has a Generalized Inverse Gaussian (GIG) distribution and $Y$ has a Gamma distribution. In the free version of this theorem (i.e. the theorem where one replaces classical independence assumptions by free independence) it turns out that the role of the Gamma distribution is taken again by the free Poisson distribution and the role of the GIG distribution plays a probability measure which appeared for the first time in \cite{Fer06}. We will refer to this measure as the free Generalized Inverse Gaussian distribution or fGIG for short. We give the definition of this distribution in Section 2.

The main motivation of this paper is to study further properties of fGIG distribution. The results from \cite{Szp17} suggest that in some sense (but not by means of the BP bijection) this distribution is the free probability analogue of the classical GIG distribution. It is natural then to ask if fGIG distribution shares more properties with its classical counterpart. It is known that the classical GIG distribution is infinitely divisible (see \cite{BNH77}) and selfdecomposable (see \cite{Hal79,SS79}). In \cite{LS83} the GIG distribution was characterized in terms of an equality in distribution, namely if we take $X,Y_1,Y_2$ independent and such that $Y_1$ and $Y_2$ have Gamma distributions with suitable parameters and we assume that 
\begin{align} 
X\stackrel{d}{=}\frac{1}{Y_2+\frac{1}{Y_1+X}}
\end{align}
then $X$ necessarily has a GIG distribution. A simpler version of this theorem characterizes smaller class of fGIG distributions by equality
\begin{align} \label{eq:char}
X\stackrel{d}{=}\frac{1}{Y_1+X}
\end{align}
for $X$ and $Y_1$ as described above.

The overall result of this paper is that the two distributions GIG and fGIG indeed have many similarities. We show that fGIG distribution is freely infinitely divisible and even more that it is free regular. Moreover fGIG distribution can be characterized by the equality in distribution \eqref{eq:char}, where one has to replace the independence assumption by freeness and assume that $Y_1$ has free Poisson distributions. While there are only several examples of freely selfdecomposable distributions it is interesting to ask whether fGIG has this property. It turns out that selfdecomposability is the point where the symmetry between GIG and fGIG partially breaks down: not all fGIG distributions are freely selfdecomposable. We find conditions on the parameters of fGIG family for which this distributions are freely selfdecomposable. 
Except from the results mentioned above we prove that fGIG distribution is unimodal. We also point out that in \cite{Fer06} it was proved that fGIG maximizes a certain free entropy functional. An easy application of Gibbs' inequality shows that the classical GIG maximizes the same functional of classical entropy.

The paper is organized as follows: In Section 2 we shortly recall basics of free probability and next we study some properties of fGIG distributions. Section 3 is devoted to the study of free infinite divisibility, free regularity, free selfdecomposability and unimodality of the fGIG distribution. In Section 4 we show that the free counterpart of the characterization of GIG distribution by \eqref{eq:char} holds true, and we discuss entropy analogies between GIG and fGIG.

\section{Free GIG distributions}
In this section we recall the definition of free GIG distribution and study basic properties of this distribution. In particular we study in detail the $R$-transform of fGIG distribution. Some of the properties established in this section will be crucial in the subsequent sections where we study free infinite divisibility of the free GIG distribution and characterization of the free GIG distribution.
The free GIG distribution appeared for the first time (not under the name free GIG) as the almost sure weak limit of empirical spectral distribution of GIG matrices (see \cite{Fer06}).

\subsection{Basics of free probability}
This paper deals mainly with properties of free GIG distribution related to free probability and in particular to free convolution. Therefore in this section we introduce notions and tools that we need in this paper. The introduction is far from being detailed, reader not familiar with free probability may find a very good introduction to the theory in \cite{VDN92,NS06,MS}.

\begin{enumerate}[$1^o$]
	\item A $C^*$--probability space is a pair $(\mathcal{A},\varphi)$, where $\mathcal{A}$ is a unital $C^*$-algebra and $\varphi$ is a linear functional $\varphi\colon\mathcal{A}\to\mathbb{C}$, such that $\varphi(\mathit{1}_\mathcal{A})=1$ and $\varphi(aa^*)\geq 0$. Here by $\mathit{1}_\mathcal{A}$ we understand the unit of $\mathcal{A}$.
	
	\item Let $I$ be an index set. A family of subalgebras $\left(\mathcal{A}_i\right)_{i\in I}$ are called free if $\varphi(X_1\cdots X_n)=0$ whenever $a_i\in \mathcal{A}_{j_i}$, $j_1\neq j_2\neq \ldots \neq j_n$ and $\varphi(X_i)=0$ for all $i=1,\ldots,n$ and $n=1,2,\ldots$. Similarly, self-adjoint random variables $X,\,Y\in\mathcal{A}$ are free (freely independent) when subalgebras generated by $(X,\,\mathit{1}_\mathcal{A})$ and $(Y,\,\mathit{1}_\mathcal{A})$ are freely independent.
	
	\item The distribution of a self-adjoint random variable is identified via moments, that is for a random variable $X$ we say that a probability measure $\mu$ is the distribution of $X$ if \[\varphi(X^n)=\int t^n\,\dd\mu(t),\,\mbox{for all } n=1,2,\ldots\]
	Note that since we assume that our algebra $\mathcal{A}$ is a $C^*$--algebra, all random variables are bounded, thus the sequence of moments indeed determines a unique probability measure.
	
	\item The distribution of the sum $X+Y$ for free random variables $X,Y$ with respective distributions $\mu$ and $\nu$ is called the free convolution of $\mu$ and $\nu$, and is denoted by $\mu\boxplus\nu$.
\end{enumerate}

\subsection{Free GIG distribution}
In this paper we are concerned with a specific family of probability measures which we will refer to as free GIG (fGIG) distributions.
\begin{defi}
	The free Generalized Inverse Gaussian (fGIG) distribution is a measure $\mu=\mu(\alpha,\beta,\lambda)$, where $\lambda\in\mathbb{R}$ and $\alpha,\beta>0$ which is compactly supported on the interval $[a,b]$ with the density
	\begin{align*}
	\mu(\dd x)=\frac{1}{2\pi}\sqrt{(x-a)(b-x)} \left(\frac{\alpha}{x}+\frac{\beta}{\sqrt{ab}x^2}\right)\dd x,
	\end{align*}
	where $0<a<b$ are the solution of
	\begin{align}\label{eq1}
	1-\lambda+\alpha\sqrt{ab}-\beta\frac{a+b}{2ab}=&0\\
	\label{eq2}
	1+\lambda+\frac{\beta}{\sqrt{ab}}-\alpha\frac{a+b}{2}=&0.
	\end{align}
\end{defi}

Observe that the system of equations for coefficients for fixed $\lambda\in \mathbb{R}$ and $\alpha,\beta>0$ has a unique solution $0<a<b$. We can easily get the following
\begin{rem}\label{prop:ab}
Let $\lambda\in\R$. Given $\alpha,\beta>0$, the system of equations \eqref{eq1}, \eqref{eq2}
has a unique solution $(a,b)$ such that 
\begin{equation}\label{unique}
0<a<b, \qquad |\lambda| \left(\frac{\sqrt{a}-\sqrt{b}}{\sqrt{a}+\sqrt{b}}\right)^2<1.
\end{equation} 
Conversely, given $(a,b)$ satisfying \eqref{unique}, the set of equations \eqref{eq1}--\eqref{eq2} has a unique solution $(\alpha,\beta)$, which is given by   
\begin{align}
&\alpha = \frac{2}{(\sqrt{a}-\sqrt{b})^2}\left( 1 + \lambda \left(\frac{\sqrt{a}-\sqrt{b}}{\sqrt{a}+\sqrt{b}}\right)^2 \right) >0, \label{eq3}\\
&\beta = \frac{2 a b}{ (\sqrt{a} - \sqrt{b})^2}\left( 1 - \lambda \left(\frac{\sqrt{a}-\sqrt{b}}{\sqrt{a}+\sqrt{b}}\right)^2 \right)>0. \label{eq4}
\end{align}
\end{rem}
Thus we may parametrize fGIG distribution using parameters $(a,b,\lambda)$ satisfying \eqref{unique} instead of $(\alpha,\beta,\lambda)$. We will make it clear whenever we will use a parametrization different than $(\alpha,\beta,\lambda)$. 
\begin{rem}
	It is useful to introduce another parameterization to describe the distribution $\mu(\alpha,\beta,\lambda)$. Define 
	\begin{equation}\label{eq:AB}
	A=(\sqrt{b}-\sqrt{a})^2, \qquad B= (\sqrt{a}+\sqrt{b})^2, 
	\end{equation}
	observe that we have then 
	\begin{align*}
	\alpha =& \frac{2}{A}\left( 1 + \lambda \frac{A}{B} \right) >0, \qquad
	\beta = \frac{(B-A)^2}{8 A}\left( 1 - \lambda \frac{A}{B} \right)>0,\\
	a =& \left(\frac{\sqrt{B}-\sqrt{A}}{2}\right)^2,\qquad
	b = \left(\frac{\sqrt{A}+\sqrt{B}}{2}\right)^2.
	\end{align*}
	The condition \eqref{unique} is equivalent to 
	\begin{equation}\label{eq:ABineq}
	0<\max\{1,|\lambda|\}A<B.
	\end{equation}
	Thus one can describe any measure $\mu(\alpha,\beta,\lambda)$ in terms of $\lambda,A,B$.
\end{rem}

\subsection{$R$-transform of fGIG distribution}\label{sec:form}

The $R$-transform of the measure $\mu(\alpha,\beta,\lambda)$ was calculated in \cite{Szp17}. Since the $R$-transform will play a crucial role in the paper we devote this section for a detailed study of its properties. We also point out some properties of fGIG distribution which are derived from properties of the $R$-transform.

Before we present the $R$-transform of fGIG distribution let us briefly recall how the $R$-transform is defined and stress its importance for free probability.
\begin{rem}\label{rem:Cauchy}
\begin{enumerate}[$1^o$]
	\item For a probability measure $\mu$ one defines its Cauchy transform via 
	\[G_\mu(z)=\int \frac{1}{z-x}\dd\mu(x).\]
	It is an analytic function on the upper-half plane with values in the lower half-plane. Cauchy transform determines uniquely the measure and there is an inversion formula called Stieltjes inversion formula, namely for $h_\ep(t)=-\tfrac{1}{\pi}\im\, G_\mu(t+i\ep)$ one has
	\[
	\dd\mu(t)=\lim_{\ep\to 0^+} h_\ep(t)\,\dd t, 
	\]
	where the limit is taken in the weak topology.
	
	\item For a compactly supported measure $\mu$ one can define in a neighbourhood of the origin so called $R$-transform  by
	\[R_\mu(z)=G_\mu^{\langle -1 \rangle}(z)-\frac{1}{z},\]
	where by $G_\mu^{\langle -1 \rangle}$ we denote the inverse under composition of the Cauchy transform of $\mu$.\\
	The relevance of the $R$-transform for free probability comes form the fact that it linearizes free convolution, that is $R_{\mu\boxplus\nu}=R_\mu+R_\nu$ in a neighbourhood of zero. 
	\end{enumerate}
\end{rem}
The $R$-transform of fGIG distribution is given by
\begin{equation}\label{eq:R_F}
\begin{split}
r_{\alpha,\beta,\lambda}(z) 
&= \frac{-\alpha + (\lambda+1)z + \sqrt{f_{\alpha,\beta,\lambda}(z)}}{2z(\alpha-z)} 
\end{split}
\end{equation}
in a neighbourhood of $0$, where the square root is the principal value, 
\begin{equation}\label{eq:pol_fpar} 
f_{\alpha,\beta,\lambda}(z)=(\alpha+(\lambda-1)z)^2-4\beta z (z-\alpha)(z-\gamma),
\end{equation}
and 
\begin{align*}
\gamma=\frac{\alpha^2 a b+\frac{\beta^2}{ab}-2\alpha\beta\left(\frac{a+b}{\sqrt{ab}}-1\right)-(\lambda-1)^2}{4\beta}.
\end{align*}
Note that $z=0$ is a removable singular point of $r_{\alpha,\beta,\lambda}$. Observe that in terms of $A,B$ defined by \eqref{eq:AB} we have
\begin{align*}
\gamma &= 2\frac{\lambda A^2 + A B - 2B^2}{B(B-A)^2}. 
\end{align*}
It is straightforward to observe that \eqref{eq:ABineq} implies $A (\lambda A+B)<2 A B<2B^2$, thus we have $\gamma<0$. 

The following remark was used in \cite[Remark 2.1]{Szp17} without a proof.  We give a proof here. 
\begin{rem}\label{rem:F_lambda_sym}
	We have $f_{\alpha,\beta,\lambda}(z)=f_{\alpha,\beta,-\lambda}(z)$, where $\alpha,\beta>0,\lambda\in\mathbb{R}$. 
	\begin{proof}	
	To see this one has to insert the definition of $\gamma$ into \eqref{eq:pol_fpar} to obtain 	
	\[f_{\alpha,\beta,\lambda}(z)=\alpha z \lambda^2+\left(\left(ab\alpha^2-2\alpha\beta\frac{a+b}{\sqrt{ab}}+\frac{\beta^2}{ab}+2\alpha\beta\right)z-4 \beta z^2-\alpha\right) (z-\alpha), 
	\]
where $a=a(\alpha,\beta,\lambda)$ and $b=b(\alpha,\beta,\lambda)$.  Thus it suffices to show that the quantity $g(\alpha,\beta,\lambda):=ab\alpha^2-2\alpha\beta\frac{a+b}{\sqrt{ab}}+\frac{\beta^2}{ab}$ does not depend on the sign of $\lambda$.  To see this, observe from the system of equations \eqref{eq1} and \eqref{eq2} that $a(\alpha,\beta,-\lambda)=\frac{\beta}{\alpha b(\alpha,\beta,\lambda)}$ and  $b(\alpha,\beta,-\lambda)=\frac{\beta}{\alpha a(\alpha,\beta,\lambda)}$. It is then straightforward to check that 
$
g(\alpha,\beta,-\lambda) = g(\alpha,\beta,\lambda). 
$
	\end{proof}
\end{rem}

\begin{prop}
	The $R$-transform of the measure $\mu(\alpha,\beta,\lambda)$ can be extended to a function (still denoted by $r_{\alpha,\beta,\lambda}$) which is analytic on $\mathbb{C}^{-}$ and continuous on $(\C^- \cup\R)\setminus\{\alpha\}$.
\end{prop}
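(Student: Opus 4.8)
The plan is to read off the possible singularities directly from the closed formula \eqref{eq:R_F}. On $(\C^-\cup\R)\setminus\{\alpha\}$ the function $r_{\alpha,\beta,\lambda}$ can fail to be analytic (resp.\ continuous) only at the zeros of the denominator $2z(\alpha-z)$, namely $z=0$ and $z=\alpha$, and at the branch points of the square root $\sqrt{f_{\alpha,\beta,\lambda}}$, i.e.\ the zeros of the cubic polynomial $f_{\alpha,\beta,\lambda}$ from \eqref{eq:pol_fpar}. Both $z=0$ and $z=\alpha$ are real, so the denominator never vanishes on $\C^-$; the point $z=0$ is the removable singularity already noted, while $z=\alpha$ is the excluded point. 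Hence the whole statement reduces to controlling the square root.

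The heart of the matter is to show that $f_{\alpha,\beta,\lambda}$ has no zeros in the open lower half-plane $\C^-$. Since $f_{\alpha,\beta,\lambda}$ has real coefficients, its non-real zeros come in conjugate pairs, so this is equivalent to showing that all three zeros of $f_{\alpha,\beta,\lambda}$ are real. Granting this, $\C^-$ is a simply connected domain on which $f_{\alpha,\beta,\lambda}$ does not vanish, so the branch of the square root fixed near $0$ continues to a single-valued analytic function $\sqrt{f_{\alpha,\beta,\lambda}}$ on all of $\C^-$ (it need no longer be the principal value where $f_{\alpha,\beta,\lambda}$ meets the negative reals, but single-valuedness and analyticity only require non-vanishing on a simply connected set). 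Dividing by the non-vanishing factor $2z(\alpha-z)$ then yields analyticity of $r_{\alpha,\beta,\lambda}$ on $\C^-$.

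To establish reality of the three zeros I would exploit the interpretation of $f_{\alpha,\beta,\lambda}$ as a discriminant. Eliminating the square root in \eqref{eq:R_F} shows that $w=r_{\alpha,\beta,\lambda}(z)$ satisfies
\[
z(z-\alpha)\,w^2+\big((\lambda+1)z-\alpha\big)\,w+\beta(z-\gamma)+\lambda=0,
\]
whose discriminant in $w$ equals $f_{\alpha,\beta,\lambda}(z)$. Because $z\mapsto r_{\alpha,\beta,\lambda}(z)+1/z$ is the inverse of the Cauchy transform $G_\mu$, the zeros of $f_{\alpha,\beta,\lambda}$ are exactly the branch points of this inverse; two of them are the images $G_\mu(a)$ and $G_\mu(b)$ of the endpoints of the support $[a,b]\subset(0,\infty)$, at which $G_\mu$ has square-root branch points. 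Since the density vanishes like a square root at $a$ and $b$, these boundary values of $G_\mu$ are real, so $f_{\alpha,\beta,\lambda}$ has (at least) two real zeros; a real cubic with two real zeros has its third zero real as well, which is what we need. As an alternative, purely computational route one may first reduce to $\lambda\ge 0$ by Remark \ref{rem:F_lambda_sym} and then verify in the $(a,b,\lambda)$ (or $(A,B,\lambda)$) parametrization that the discriminant of the cubic $f_{\alpha,\beta,\lambda}$ is non-negative, locating the three real roots explicitly.

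Finally, for continuity up to the real axis: as $f_{\alpha,\beta,\lambda}$ is a polynomial with only real zeros, the chosen branch of $\sqrt{f_{\alpha,\beta,\lambda}}$ extends analytically across each real point where $f_{\alpha,\beta,\lambda}\ne 0$ and continuously (with value $0$) across each real zero. Combined with the fact that $2z(\alpha-z)$ is non-zero on $\R\setminus\{0,\alpha\}$ and that $z=0$ is removable, this gives continuity of $r_{\alpha,\beta,\lambda}$ on $(\C^-\cup\R)\setminus\{\alpha\}$. I expect the reality of the three roots of $f_{\alpha,\beta,\lambda}$ to be the main obstacle; once it is in hand, the rest is bookkeeping with the explicit formula.
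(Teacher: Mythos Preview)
Your reduction is exactly right: once all roots of the cubic $f_{\alpha,\beta,\lambda}$ are known to be real, analyticity on $\C^-$ and continuity up to $\R\setminus\{\alpha\}$ follow just as you describe, and this is also the paper's route. For the reality of the roots, however, the paper takes what you call the ``alternative, purely computational route'': a direct calculation in the $(A,B,\lambda)$ parametrization gives the explicit factorization
\[
f_{\alpha,\beta,\lambda}(z)=4\beta\,(z-\delta)^2(\eta-z),
\]
with a \emph{double} root $\delta<0$ and a simple root $\eta\ge\alpha$, both given by the closed formulas \eqref{eq5}--\eqref{eq6}. This factorization then feeds the whole of Section~3 (the free L\'evy measure, free regularity, selfdecomposability), so it is not merely an alternative but the essential computation.

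Your primary, conceptual argument has a genuine gap. The square-root edges at $a$ and $b$ make $a$ and $b$ branch points of $G_\mu$ as a function of $v$, i.e.\ zeros of the discriminant of the quadratic satisfied by $G_\mu(v)$; they do \emph{not} force $G_\mu(a)$ and $G_\mu(b)$ to be zeros of $f$, which is the discriminant in the other variable $w$. At $z=G_\mu(b)$ the inverse $G_\mu^{-1}$ extends analytically with a mere critical point (one has $(G_\mu^{-1})'=0$ because $G_\mu'(b)=-\infty$), not a branch point of the two-valued function $w_\pm$; the second root $w_-$ of your quadratic need not coincide with $w_+$ there. The paper's factorization makes this visible: since $\delta$ is a double root, $\sqrt{f}$ has only \emph{one} genuine branch point (at $\eta$), so any argument that would produce two distinct simple real zeros of $f$ from the two edges cannot be correct. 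You should carry out the explicit factorization.
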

\begin{proof}	
A direct calculation shows that using parameters $A,B$ defined by \eqref{eq:AB} the polynomial $f_{\alpha,\beta,\lambda}$ under the square root factors as
\begin{equation*}
f_{\alpha,\beta,\lambda}(z) =  \frac{(B-A)^2(B-\lambda A)}{2 A B}\left[z +\frac{2(B+\lambda A)}{B(B-A)}\right]^2  \left[  \frac{2B}{A(B-\lambda A)} - z   \right]. 
\end{equation*}
Thus we can write 
\begin{equation}\label{eq:pol_par}
f_{\alpha,\beta,\lambda}(z) = 4\beta (z-\delta)^2(\eta-z), 
\end{equation}
where 
\begin{align}
&\delta = - \frac{2(B+\lambda A)}{B(B-A)}<0, \label{eq5}\\
&\eta = \frac{2B}{A(B-\lambda A)} >0. \label{eq6}
\end{align}
It is straightforward to verify that \eqref{eq:ABineq} implies $\eta \geq \alpha$ with equality valid only when $\lambda=0$.\\
Calculating $f_{\alpha,\beta,\lambda}(0)$ using first \eqref{eq:pol_fpar} and then \eqref{eq:pol_par} we get $4 \beta \eta \delta^2 = \alpha^2$, since $\eta \geq \alpha$ we see that $\delta \geq -\sqrt{\alpha/(4\beta)}$ with equality only when $\lambda=0$. 

Since all roots of $f_{\alpha,\beta,\lambda}$ are real, the square root $\sqrt{f_{\alpha,\beta,\lambda}(z)}$ may be defined continuously on $\C^-\cup\R$ so that $\sqrt{f_{\alpha,\beta,\lambda}(0)}=\alpha$. As noted above $\delta<0$, and continuity of $f_{\alpha,\beta,\lambda}$ implies that we have
 \begin{equation}\label{RRR}
 \sqrt{f_{\alpha,\beta,\lambda}(z)} = 2(z-\delta)\sqrt{\beta(\eta-z)},  
\end{equation}
where we take the principal value of the square root in the expression $\sqrt{4\beta(\eta-z)}$. Thus finally we arrive at the following form of the $R$-transform
\begin{equation}\label{R}
\begin{split}
r_{\alpha,\beta,\lambda}(z) 
&= \frac{-\alpha + (\lambda+1)z + 2(z-\delta)\sqrt{\beta(\eta-z)}}{2z(\alpha-z)} 
\end{split}
\end{equation}
which is analytic in $\C^-$ and continuous in $(\C^- \cup\R)\setminus\{\alpha\}$ as required. 
\end{proof}
Next we describe the behaviour of the $R$-transform around the singular point $z=\alpha$.

\begin{prop}
	If $\lambda>0$ then 
	\begin{equation}\label{alpha}
	\begin{split}
	r_{\alpha,\beta,\lambda}(z) 
	= \frac{\lambda}{\alpha-z} -\frac{1}{2\alpha} \left(1+\lambda+\frac{\sqrt{\beta}(2\eta-3\alpha+\delta)}{\sqrt{\eta-\alpha}}\right) + o(1),\qquad\mbox{as } z\to\alpha. 
	\end{split}
	\end{equation}
	If $\lambda<0$ then
	\begin{equation}\label{alpha2}
	r_{\alpha,\beta,\lambda}(z) = - \frac{1}{2\alpha}\left(1+\lambda+\frac{\sqrt{\beta}(2\eta-3\alpha+\delta)}{\sqrt{\eta-\alpha}}\right)+ o(1), \qquad \mbox{as } z\to\alpha.  
	\end{equation}
	In the remaining case $\lambda=0$ one has
	\begin{equation}\label{alpha3}
	\begin{split}
	r_{\alpha,\beta,0}(z) 
	&= \frac{-\alpha + z + 2(z-\delta)\sqrt{\beta(\alpha-z)}}{2z(\alpha-z)} = -\frac{1}{2z} + \frac{\sqrt{\beta}(z-\delta)}{z\sqrt{\alpha-z}}. 
	\end{split}
	\end{equation}
	
\end{prop}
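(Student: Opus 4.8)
The plan is to split into the non-degenerate regime $\lambda\neq 0$, where the preceding proposition gives $\eta>\alpha$, and the boundary case $\lambda=0$, where $\eta=\alpha$. The case $\lambda=0$ needs no limit at all: putting $\eta=\alpha$ in \eqref{R} gives the first equality in \eqref{alpha3}, and splitting the fraction — the rational part $(z-\alpha)/(2z(\alpha-z))$ collapsing to $-1/(2z)$ and the radical part simplifying through $\sqrt{\alpha-z}/(\alpha-z)=1/\sqrt{\alpha-z}$ — gives the second equality. This also makes transparent why the degenerate case carries an $(\alpha-z)^{-1/2}$ singularity rather than a pole.

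For $\lambda\neq 0$ the branch point $z=\eta$ of $\sqrt{\beta(\eta-z)}$ lies strictly to the right of $\alpha$, so this radical, hence the whole numerator $N(z):=-\alpha+(\lambda+1)z+2(z-\delta)\sqrt{\beta(\eta-z)}$ of \eqref{R}, is analytic near $z=\alpha$; the only possible singularity of $r_{\alpha,\beta,\lambda}$ there is the simple zero of $2z(\alpha-z)$. The crucial step is to evaluate $N(\alpha)$, for which I would compute $f_{\alpha,\beta,\lambda}(\alpha)$ in two ways: the factor $(z-\alpha)$ in \eqref{eq:pol_fpar} annihilates the second term, leaving $f_{\alpha,\beta,\lambda}(\alpha)=(\alpha+(\lambda-1)\alpha)^2=\lambda^2\alpha^2$, while \eqref{RRR} at $z=\alpha$ — where $\alpha-\delta>0$ and $\eta-\alpha>0$ make the right-hand side a positive real — gives $\sqrt{f_{\alpha,\beta,\lambda}(\alpha)}=2(\alpha-\delta)\sqrt{\beta(\eta-\alpha)}$. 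Comparing the two yields the key identity
\[
2(\alpha-\delta)\sqrt{\beta(\eta-\alpha)}=|\lambda|\,\alpha,
\qquad\text{so that}\qquad
N(\alpha)=\lambda\alpha+|\lambda|\alpha .
\]
Thus $N(\alpha)=2\lambda\alpha$ for $\lambda>0$ and $N(\alpha)=0$ for $\lambda<0$; this single sign dichotomy is exactly what turns the zero of $2z(\alpha-z)$ into a simple pole with coefficient $\lambda$ in \eqref{alpha} but into a removable singularity (a finite limit) in \eqref{alpha2}.

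It then remains to read off the Laurent expansion of $r_{\alpha,\beta,\lambda}=N/(2z(\alpha-z))$ at $z=\alpha$. Since $2z(\alpha-z)=-2\alpha(z-\alpha)+O((z-\alpha)^2)$, the coefficient of $1/(\alpha-z)$ is $N(\alpha)/(2\alpha)$, equal to $\lambda$ when $\lambda>0$ and absent when $\lambda<0$, which matches the stated leading terms. For the constant term I would, when $\lambda>0$, subtract the pole and apply l'Hôpital to $(N(z)-2\lambda z)/(2z(\alpha-z))$ — whose numerator vanishes at $\alpha$ precisely because $N(\alpha)=2\lambda\alpha$ — and, when $\lambda<0$, apply l'Hôpital directly to $N(z)/(2z(\alpha-z))$. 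Both reduce to evaluating
\[
\frac{\dd}{\dd z}\Bigl[2(z-\delta)\sqrt{\beta(\eta-z)}\Bigr]\Big|_{z=\alpha}=\frac{\sqrt{\beta}\,(2\eta-3\alpha+\delta)}{\sqrt{\eta-\alpha}},
\]
which, together with the contribution $\lambda+1$ from the rational part of $N'$ and the factor $-1/(2\alpha)$ arising from the derivative of $2z(\alpha-z)$ at $z=\alpha$, produces the bracketed constant in \eqref{alpha}--\eqref{alpha2}.

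Beyond the algebraic identity $f_{\alpha,\beta,\lambda}(\alpha)=\lambda^2\alpha^2$ there is no conceptual obstacle; the real care lies in the bookkeeping of the two sign regimes. In fact my computation gives both constants in the uniform shape $-\frac{1}{2\alpha}\bigl(1-|\lambda|+\frac{\sqrt{\beta}(2\eta-3\alpha+\delta)}{\sqrt{\eta-\alpha}}\bigr)$, which reproduces \eqref{alpha2} verbatim for $\lambda<0$; for $\lambda>0$ it reads $-\frac{1}{2\alpha}\bigl(1-\lambda+\cdots\bigr)$, so before finalizing I would recheck the sign of the $\lambda$-term in \eqref{alpha} against a single numerical instance.
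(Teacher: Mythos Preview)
Your approach is essentially the paper's own: compute $f_{\alpha,\beta,\lambda}(\alpha)=\lambda^2\alpha^2$ from \eqref{eq:pol_fpar}, match it against \eqref{RRR} to get $2(\alpha-\delta)\sqrt{\beta(\eta-\alpha)}=\alpha|\lambda|$, and then expand to first order at $z=\alpha$. The paper phrases the last step as the Taylor expansion \eqref{Taylor} of $\sqrt{f_{\alpha,\beta,\lambda}}$, while you phrase it as l'H\^opital applied to $N/(2z(\alpha-z))$ (or to $(N-2\lambda z)/(2z(\alpha-z))$ after peeling off the pole); these are the same computation.

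Your hesitation about the sign in \eqref{alpha} is justified: carrying out either method honestly gives, for $\lambda>0$,
\[
r_{\alpha,\beta,\lambda}(z)=\frac{\lambda}{\alpha-z}-\frac{1}{2\alpha}\left(1-\lambda+\frac{\sqrt{\beta}(2\eta-3\alpha+\delta)}{\sqrt{\eta-\alpha}}\right)+o(1),
\]
i.e.\ the constant term is $-\frac{1}{2\alpha}\bigl(1-|\lambda|+K\bigr)$ uniformly in the sign of $\lambda$, exactly as you found. The $1+\lambda$ in the paper's \eqref{alpha} is a typo (harmless for the sequel, since only the pole part $\lambda/(\alpha-z)$ is used later). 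One small wording issue: when you say the constant ``together with the contribution $\lambda+1$ from the rational part of $N'$\,'' you should, in the $\lambda>0$ case, remember the extra $-2\lambda$ coming from differentiating the subtracted term $-2\lambda z$; that is precisely what turns $\lambda+1$ into $1-\lambda$ and reconciles your two paragraphs.
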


\begin{proof}
By the definition we have $f_{\alpha,\beta,\lambda}(\alpha) = (\lambda\alpha)^2$, substituting this in the expression \eqref{RRR} we obtain that
$
\alpha |\lambda| = 2(\alpha-\delta)\sqrt{\beta (\eta-\alpha)}. 
$
Taking the Taylor expansion around $z=\alpha$ for $\lambda \neq0$ we obtain
\begin{equation}\label{Taylor}
\sqrt{f_{\alpha,\beta,\lambda}(z)}=\alpha|\lambda| + \frac{\sqrt{\beta}(2\eta-3\alpha+\delta)}{\sqrt{\eta-\alpha}}(z-\alpha) + o(|z-\alpha|),\qquad \mbox{as }z\to \alpha. 
\end{equation} 
This implies \eqref{alpha} and \eqref{alpha2}
and so $r_{\alpha,\beta,\lambda}$ may be extended to a continued function on $\C^-\cup \R$. 

The case $\lambda=0$ follows from the fact that in this case we have  $\eta=\alpha$.
\end{proof}
\begin{cor}
	In the case $\lambda<0$ one can extend $r_{\alpha,\beta,\lambda}$ to an analytic function in $\C^-$ and continuous in $\C^- \cup\R$.
\end{cor}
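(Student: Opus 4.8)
The plan is to recognize that this corollary merely repackages, in the case $\lambda<0$, information already contained in the two preceding propositions; the single point that needs attention is the apparent singularity at $z=\alpha$. Analyticity on the open lower half-plane $\C^-$ requires nothing new: it was already asserted in the first proposition of this subsection, and indeed the only zeros of the denominator $2z(\alpha-z)$ in \eqref{R}, namely $z=0$ and $z=\alpha$, are real, as is the branch point $\eta$ of $\sqrt{\beta(\eta-z)}$, so none of them lies in $\C^-$. Continuity on $(\C^-\cup\R)\setminus\{\alpha\}$ is likewise already granted, so the whole task reduces to extending $r_{\alpha,\beta,\lambda}$ continuously to the point $z=\alpha$.

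First I would record that $\lambda<0$ forces the inequality $\eta\geq\alpha$ observed earlier to be strict, so $\eta>\alpha$. Hence $\eta-z$ remains near the positive number $\eta-\alpha$ as $z\to\alpha$, and $z\mapsto\sqrt{\beta(\eta-z)}$ is analytic in a full complex neighbourhood of $\alpha$. Consequently both the numerator $N(z)=-\alpha+(\lambda+1)z+2(z-\delta)\sqrt{\beta(\eta-z)}$ and the denominator $D(z)=2z(\alpha-z)$ of \eqref{R} are analytic near $\alpha$, with $D$ having a simple zero there since $D'(\alpha)=-2\alpha\neq0$.

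The key step, and essentially the only substantive one, is to verify that $N$ also vanishes at $\alpha$. Combining the identity $\alpha|\lambda|=2(\alpha-\delta)\sqrt{\beta(\eta-\alpha)}$ from the previous proof with $|\lambda|=-\lambda$, I would compute
\[
N(\alpha)=\lambda\alpha+2(\alpha-\delta)\sqrt{\beta(\eta-\alpha)}=\lambda\alpha+\alpha|\lambda|=0.
\]
This is exactly where the hypothesis $\lambda<0$ is used: for $\lambda>0$ the same computation gives $N(\alpha)=2\lambda\alpha\neq0$, which produces the genuine simple pole $\lambda/(\alpha-z)$ appearing in \eqref{alpha}. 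Since $N(\alpha)=D(\alpha)=0$ and $D$ has a simple zero at $\alpha$, the quotient $N/D=r_{\alpha,\beta,\lambda}$ has a removable singularity at $\alpha$; its value there is the finite limit already displayed in \eqref{alpha2}. Assigning that value to $r_{\alpha,\beta,\lambda}(\alpha)$ produces a function analytic in $\C^-$ and continuous on $\C^-\cup\R$, proving the corollary.
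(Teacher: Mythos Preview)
Your proposal is correct and follows essentially the same route as the paper. The paper states the corollary without a separate proof because it is meant as an immediate consequence of the preceding proposition: equation \eqref{alpha2} already exhibits a finite limit of $r_{\alpha,\beta,\lambda}$ at $z=\alpha$ when $\lambda<0$, and together with the analyticity on $\C^-$ and continuity on $(\C^-\cup\R)\setminus\{\alpha\}$ from the earlier proposition this gives the claim. Your argument unpacks exactly why that limit is finite by checking $N(\alpha)=0$ via the identity $\alpha|\lambda|=2(\alpha-\delta)\sqrt{\beta(\eta-\alpha)}$, which is precisely the computation underlying the Taylor expansion \eqref{Taylor} that produces \eqref{alpha2}; so the substance is the same, just spelled out more explicitly.
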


\subsection{Some properties of fGIG distribution}
We study here further properties of free GIG distribution. Some of them motivate Section 4 where we will characterize fGIG distribution in a way analogous to classical GIG distribution. 

The next remark recalls the definition and some basic facts about free Poisson distribution, which will play an important role in this paper.

\begin{rem}\label{rem:freePoisson}
\begin{enumerate}[$1^o$]
\item Marchenko--Pastur (or free-Poisson) distribution $\nu=\nu(\gamma, \lambda)$ is defined by the formula
	\begin{align*}
	\nu=\max\{0,\,1-\lambda\}\,\delta_0+\tilde{\nu},
	\end{align*}
	where $\gamma,\lambda> 0$ and the measure $\tilde{\nu}$, supported on the interval $(\gamma(1-\sqrt{\lambda})^2,\,\gamma(1+\sqrt{\lambda})^2)$, has the density (with respect to the Lebesgue measure)
	$$
	\tilde{\nu}(\dd x)=\frac{1}{2\pi\gamma x}\,\sqrt{4\lambda\gamma^2-(x-\gamma(1+\lambda))^2}\,\dd x. 
	$$
	
\item The $R$-transform of the free Poisson distribution $\nu(\gamma,\lambda)$ is of the form
	\[r_{\nu(\gamma, \lambda)}(z)=\frac{\gamma\lambda}{1-\gamma z}.\]
\end{enumerate}
\end{rem}

The next proposition was proved in \cite[Remark 2.1]{Szp17} which is the free counterpart of a convolution property of classical Gamma and GIG distribution. The proof is a straightforward calculation of the $R$-transform with the help of Remark \ref{rem:F_lambda_sym}. 
\begin{prop}
	\label{GIGPoissConv}
	Let $X$ and $Y$ be free, $X$ free GIG distributed $\mu(\alpha,\beta,-\lambda)$ and $Y$ free Poisson distributed $\nu(1/\alpha,\lambda)$ respectively, for $\alpha,\beta,\lambda>0$. Then $X+Y$ is free GIG distributed $\mu(\alpha,\beta,\lambda)$. 
\end{prop}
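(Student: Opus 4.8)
The plan is to exploit the linearization property of the $R$-transform recorded in Remark~\ref{rem:Cauchy}, namely $R_{\mu\boxplus\nu}=R_\mu+R_\nu$ in a neighbourhood of $0$. Since $X$ and $Y$ are free, the distribution of $X+Y$ is $\mu(\alpha,\beta,-\lambda)\boxplus\nu(1/\alpha,\lambda)$, so its $R$-transform is $r_{\alpha,\beta,-\lambda}+r_{\nu(1/\alpha,\lambda)}$. Because the $R$-transform (equivalently, the Cauchy transform together with Stieltjes inversion) determines the measure uniquely, it suffices to verify the identity
\begin{equation*}
r_{\alpha,\beta,-\lambda}(z)+r_{\nu(1/\alpha,\lambda)}(z)=r_{\alpha,\beta,\lambda}(z)
\end{equation*}
as analytic functions in a neighbourhood of $0$; this will immediately identify $X+Y$ as $\mu(\alpha,\beta,\lambda)$.

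First I would write down the two summands explicitly. From Remark~\ref{rem:freePoisson} the free Poisson $R$-transform is $r_{\nu(1/\alpha,\lambda)}(z)=\frac{(1/\alpha)\lambda}{1-z/\alpha}=\frac{\lambda}{\alpha-z}$, where the rate $1/\alpha$ is tuned precisely so that the pole sits at $z=\alpha$, matching the denominator $2z(\alpha-z)$ appearing in \eqref{eq:R_F}. The crucial input is Remark~\ref{rem:F_lambda_sym}, which gives $f_{\alpha,\beta,-\lambda}(z)=f_{\alpha,\beta,\lambda}(z)$; hence the square-root term in $r_{\alpha,\beta,-\lambda}$ is identical to that in $r_{\alpha,\beta,\lambda}$, and the two fGIG $R$-transforms differ only through the linear coefficient in the numerator, with $(\lambda+1)z$ replaced by $(1-\lambda)z$.

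Putting everything over the common denominator $2z(\alpha-z)$ and using $\frac{\lambda}{\alpha-z}=\frac{2\lambda z}{2z(\alpha-z)}$, the numerator of the sum becomes
\begin{equation*}
-\alpha+(1-\lambda)z+2\lambda z+\sqrt{f_{\alpha,\beta,\lambda}(z)}=-\alpha+(1+\lambda)z+\sqrt{f_{\alpha,\beta,\lambda}(z)},
\end{equation*}
which is exactly the numerator of $r_{\alpha,\beta,\lambda}(z)$ in \eqref{eq:R_F}. This completes the verification. There is no serious analytic obstacle here: the computation is a one-line addition of fractions, and the only nontrivial point is that both fGIG measures carry the same pair $(\alpha,\beta)$, so that after invoking the symmetry of Remark~\ref{rem:F_lambda_sym} the polynomial under the square root is literally the same function of $z$ in both $R$-transforms. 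Thus the entire content of the proposition is concentrated in that symmetry, which has already been established.
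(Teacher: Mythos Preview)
Your proof is correct and follows exactly the approach indicated in the paper: a direct computation with $R$-transforms using the symmetry $f_{\alpha,\beta,-\lambda}=f_{\alpha,\beta,\lambda}$ of Remark~\ref{rem:F_lambda_sym}. The paper does not spell out the arithmetic, but your one-line numerator check $-\alpha+(1-\lambda)z+2\lambda z+\sqrt{f_{\alpha,\beta,\lambda}(z)}=-\alpha+(1+\lambda)z+\sqrt{f_{\alpha,\beta,\lambda}(z)}$ is precisely what is meant by ``straightforward calculation of the $R$-transform.''
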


We also quote another result from \cite[Remark 2.2]{Szp17} which is again the free analogue of a property of classical GIG distribution. The proof is a simple calculation of the density. 
\begin{prop}\label{GIGInv}
	If $X$ has the free GIG distribution $\mu(\alpha,\beta,\lambda)$ then $X^{-1}$ has the free GIG distribution $\mu(\beta,\alpha,-\lambda)$.
\end{prop}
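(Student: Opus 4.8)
The plan is to prove the statement by a direct change-of-variables computation on the density, as the ``simple calculation'' alluded to in the excerpt suggests, and then to confirm that the resulting support is the one prescribed by the defining equations \eqref{eq1}--\eqref{eq2} for the target parameters. First I would observe that since $\mu(\alpha,\beta,\lambda)$ is supported on $[a,b]$ with $0<a<b$, the element $X$ is positive and bounded away from $0$, so $X^{-1}$ is a well-defined bounded self-adjoint element whose distribution is the pushforward of $\mu(\alpha,\beta,\lambda)$ under $x\mapsto 1/x$; in particular its support is $[1/b,1/a]$. Writing $y=1/x$, so that $x=1/y$ and $\dd x=-y^{-2}\,\dd y$, and letting $\mu_X$ denote the Lebesgue density of $X$, the density of $X^{-1}$ at $y$ equals $y^{-2}\,\mu_X(1/y)$.

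The key algebraic step is the factorization
\begin{equation*}
\left(\tfrac1y-a\right)\left(b-\tfrac1y\right)=\frac{(1-ay)(by-1)}{y^2}=\frac{ab}{y^2}\left(y-\tfrac1b\right)\left(\tfrac1a-y\right),
\end{equation*}
which, using $y>0$ on the support, turns the square-root factor of $\mu_X(1/y)$ into $\sqrt{ab}\,y^{-1}\sqrt{(y-1/b)(1/a-y)}$. Carrying the factors of $y$ through the rational part $\alpha/x+\beta/(\sqrt{ab}\,x^2)=\alpha y+\beta y^2/\sqrt{ab}$ and multiplying by the Jacobian $y^{-2}$, I expect the density of $X^{-1}$ to collapse to
\begin{equation*}
\frac{1}{2\pi}\sqrt{\left(y-\tfrac1b\right)\left(\tfrac1a-y\right)}\left(\frac{\beta}{y}+\frac{\alpha\sqrt{ab}}{y^2}\right).
\end{equation*}
Since $\sqrt{(1/b)(1/a)}=1/\sqrt{ab}$, the coefficient $\alpha\sqrt{ab}$ is exactly $\alpha/\sqrt{(1/b)(1/a)}$, so this is precisely the fGIG density with shape parameters $(\beta,\alpha)$ on the interval $[1/b,1/a]$.

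Finally I would check the parameter assignment $\lambda\mapsto-\lambda$ together with the consistency of the support. The point is that substituting $(\alpha',\beta',\lambda',a',b')=(\beta,\alpha,-\lambda,1/b,1/a)$ into \eqref{eq1}--\eqref{eq2} reproduces, after clearing the factors of $ab$, precisely the original equations \eqref{eq2} and \eqref{eq1} with their roles interchanged; hence $(1/b,1/a)$ really is the unique pair associated with $\mu(\beta,\alpha,-\lambda)$, and the density computed above is genuinely $\mu(\beta,\alpha,-\lambda)$. There is no analytic obstacle here: the only thing requiring care is the bookkeeping of the powers of $y$ and of the normalizing factor $\sqrt{ab}$ (which inverts to $1/\sqrt{ab}$ under the new parametrization), and everything reduces to the elementary factorization displayed above.
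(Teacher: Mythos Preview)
Your proposal is correct and is exactly the ``simple calculation of the density'' the paper alludes to (the paper gives no further detail, merely citing \cite[Remark 2.2]{Szp17}). Your change of variables, the factorization of the square-root term, and the verification that $(a',b')=(1/b,1/a)$ solves \eqref{eq1}--\eqref{eq2} for the parameters $(\beta,\alpha,-\lambda)$ are all in order; the uniqueness condition \eqref{unique} is preserved as well since $\bigl(\tfrac{\sqrt{a'}-\sqrt{b'}}{\sqrt{a'}+\sqrt{b'}}\bigr)^2=\bigl(\tfrac{\sqrt{a}-\sqrt{b}}{\sqrt{a}+\sqrt{b}}\bigr)^2$.
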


The two propositions above imply some distributional properties of fGIG distribution. In the Section 4 we will study characterization of the fGIG distribution related to these properties.

\begin{rem}\label{rem:prop}
\begin{enumerate}[$1^o$]
	
\item Fix $\lambda,\alpha>0$. If $X$ has fGIG distribution $\mu(\alpha,\alpha,-\lambda)$ and $Y$ has the free Poisson distribution $\nu(1/\alpha,\lambda)$ and $X,Y$ are free then $X\stackrel{d}{=}(X+Y)^{-1}$.

	Indeed by Proposition \ref{GIGPoissConv} we get that $X+Y$ has fGIG distribution $\mu(\alpha,\alpha,\lambda)$ and now Proposition \ref{GIGInv} implies that $(X+Y)^{-1}$ has the distribution $\mu(\alpha,\alpha,-\lambda)$.
	
\item  One can easily generalize the above observation. Take $\alpha,\beta,\lambda>0$, and $X,Y_1,Y_2$ free, such that $X$ has fGIG distribution $\mu(\alpha,\beta,-\lambda)$, $Y_1$ is free Poisson distributed $\nu(1/\beta,\lambda)$ and $Y_2$ is distributed $\nu(1/\alpha,\lambda)$, then $X\stackrel{d}{=}(Y_1+(Y_2+X)^{-1})^{-1}$.

Similarly as before we have that $X+Y_2$ has distribution $\mu(\alpha,\beta,\lambda)$, then by Proposition \ref{GIGInv} we get that $(X+Y_2)^{-1}$ has distribution $\mu(\beta,\alpha,-\lambda)$. Then we have that $Y_1+(Y_2+X)^{-1}$ has the distribution $\mu(\beta,\alpha,\lambda)$ and finally we get $(Y_1+(Y_2+X)^{-1})^{-1}$ has the desired distribution $\mu(\alpha,\beta,-\lambda)$.
	
\item Both identities above can be iterated finitely many times, so that one obtains that 
	$X\stackrel{d}{=}\left(Y_1+\left(Y_2+\cdots\right)^{-1}\right)^{-1}$, where $Y_1,Y_2,\ldots$ are free, for $k$ odd $Y_k$ has the free Poisson distribution $\nu(1/\beta,\lambda)$  and for $k$ even $Y_k$ has the distribution $\nu(1/\alpha,\lambda)$. For the case described in $1^o$ one simply has to take $\alpha=\beta$. We are not sure if infinite continued fractions can be defined. 
\end{enumerate}
\end{rem}

Next we study limits of the fGIG measure $\mu(\alpha,\beta,\lambda)$ when $\alpha\to 0$ and $\beta\to 0$. This was stated with some mistake in \cite[Remark 2.3]{Szp17}.

\begin{prop}
	As $\beta\downarrow 0$ we have the following weak limits of the fGIG distribution
	\begin{align}\label{eq:limits}
	\lim_{\beta\downarrow 0}\mu(\alpha,\beta,\lambda) = 
	\begin{cases} \nu(1/\alpha, \lambda), & \lambda \geq1, \\ 
	\frac{1-\lambda}{2}\delta_0 + \frac{1+\lambda}{2}\nu(\frac{1+\lambda}{2\alpha},1),  & |\lambda|<1,\\
	\delta_0, & \lambda\leq -1.  
	\end{cases}
	\end{align}
	Taking into account Proposition \ref{GIGInv} one can also describe limits when $\alpha\downarrow 0$ for $\lambda \geq1$. 
\end{prop}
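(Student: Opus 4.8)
The plan is to work directly with the explicit density and to track the support endpoints $a,b$ together with the coefficient $\beta/\sqrt{ab}$ through the $(A,B)$-parametrization. Writing $s=A/B$ and eliminating $A$ via $\alpha A=2(1+\lambda s)$ gives the clean relation
\[
\beta=\frac{(1-\lambda^2 s^2)(1-s)^2}{4\alpha s^2},
\]
valid on the range $s\in(0,\min\{1,1/|\lambda|\})$ dictated by \eqref{eq:ABineq}. Since $\beta(s)\to\infty$ as $s\to0^+$ while $\beta(s)\to0$ only as $s$ approaches the upper endpoint, the bijectivity in Remark \ref{prop:ab} forces $s\to\min\{1,1/|\lambda|\}$ as $\beta\downarrow0$ (with $\alpha,\lambda$ fixed). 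Substituting $A=2(1+\lambda s)/\alpha$, $B=A/s$ into $a=(\sqrt B-\sqrt A)^2/4$, $b=(\sqrt A+\sqrt B)^2/4$ and into $\sqrt{ab}=(B-A)/4$, one finds after simplification that $\beta/\sqrt{ab}=(1-s)(1-\lambda s)/(2s)$, from which the limits of $a$, $b$ and $\beta/\sqrt{ab}$ are read off in each regime.

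First I would dispose of the two cases needing no atom analysis. For $\lambda\le-1$ the upper endpoint is $1/|\lambda|$, and one computes $A,B\to0$, hence $b\to0$; since $\mu(\alpha,\beta,\lambda)$ lives on $[a,b]\subset[0,b]$, it converges weakly to $\delta_0$. For $\lambda>1$ the upper endpoint is $1/\lambda$, and $a\to(\sqrt\lambda-1)^2/\alpha>0$, $b\to(\sqrt\lambda+1)^2/\alpha$, exactly the edges of the support of $\nu(1/\alpha,\lambda)$; moreover $\beta/\sqrt{ab}\to0$, so the $\beta$-term of the density vanishes uniformly on the supports (which stay bounded away from $0$), while the $\alpha$-term converges to the free Poisson density. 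Dominated convergence against $f\in C_b(\R)$, with the uniform bound coming from $a$ being bounded below, yields $\mu(\alpha,\beta,\lambda)\to\nu(1/\alpha,\lambda)$.

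The substantive case is $|\lambda|<1$ (the boundary $\lambda=1$ being covered by the same computation with vanishing atom). Here the upper endpoint is $1$, so $s\to1$, giving $a\to0$, $b\to b_0:=2(1+\lambda)/\alpha$ and, crucially, $\beta/\sqrt{ab}\to0$. I would split the density into its $\alpha$-part and its $\beta$-part and test against $f\in C_b(\R)$. For the $\alpha$-part, pointwise convergence of the integrand to $\frac{\alpha}{2\pi x}\sqrt{x(b_0-x)}$ together with the integrable domination $\sqrt{(x-a)(b-x)}\le\sqrt{xb}$ near $0$ gives, by dominated convergence, the limit $\int_0^{b_0} f(x)\,\frac{\alpha}{2\pi x}\sqrt{x(b_0-x)}\,\dd x$, which is exactly $\frac{1+\lambda}{2}\int f\,\dd\nu(\tfrac{1+\lambda}{2\alpha},1)$ (the elementary definite integral evaluates the a.c.\ mass to $\tfrac{\alpha b_0}{4}=\tfrac{1+\lambda}{2}$). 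The $\beta$-part is where the atom hides: since $\beta/\sqrt{ab}\to0$, its integral over $\{x\ge\ep\}$ tends to $0$ for each fixed $\ep>0$, whereas its total mass equals $1$ minus the mass of the $\alpha$-part and hence tends to $1-\tfrac{1+\lambda}{2}=\tfrac{1-\lambda}{2}$. Thus all of this mass concentrates at $x=0$, and by continuity of $f$ the $\beta$-part contributes $f(0)\cdot\tfrac{1-\lambda}{2}$ in the limit. Adding the two contributions identifies the weak limit as $\frac{1-\lambda}{2}\delta_0+\frac{1+\lambda}{2}\nu(\tfrac{1+\lambda}{2\alpha},1)$.

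The main obstacle is precisely this last point: showing that the $\beta$-part of the density does not merely vanish pointwise but funnels exactly the mass $\tfrac{1-\lambda}{2}$ into an atom at the origin. The clean way around it is the bookkeeping above — control the tail $\{x\ge\ep\}$ by the vanishing factor $\beta/\sqrt{ab}$, and pin the escaping mass through the exact identity $I_\alpha+I_\beta=1$ rather than trying to integrate the $\beta$-part near its moving left endpoint $x=a\to0$ directly. I would then double-check the two elementary estimates (the a.c.\ mass $\tfrac{\alpha b_0}{4}=\tfrac{1+\lambda}{2}$ and the domination bound) and confirm that the $\lambda=1$ specialization of the mixture reproduces $\nu(1/\alpha,1)$, so that the three regimes patch together consistently across the thresholds $\lambda=\pm1$.
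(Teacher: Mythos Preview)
Your argument is correct and takes a genuinely different route from the paper. The paper works entirely through the $R$-transform: it uses the implicit function theorem on the system \eqref{eq1}--\eqref{eq2} to track the asymptotics of $a,b$ (and hence of $\beta\gamma$) in each regime, computes $\lim_{\beta\downarrow0} f_{\alpha,\beta,\lambda}$, and then reads off the limiting $R$-transform, invoking the fact that convergence of $R$-transforms implies weak convergence. You instead parametrize by $s=A/B$, obtain the closed form $\beta=(1-\lambda^2 s^2)(1-s)^2/(4\alpha s^2)$, and analyze the density directly. The main payoff of the paper's method is that the atom in the $|\lambda|<1$ case comes for free --- it simply appears in the limiting Cauchy transform --- and the approach dovetails with the $R$-transform machinery used throughout the rest of the paper. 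The payoff of your method is that it is entirely elementary (no $R$-transform identities, no implicit function theorem), and your handling of the atom is neat: rather than estimating the singular $\beta/(\sqrt{ab}\,x^2)$ integral near the moving left endpoint, you pin the escaping mass via $I_\alpha+I_\beta=1$ together with the dominated-convergence computation of $\lim I_\alpha=(1+\lambda)/2$. Your $s$-parametrization also recovers, as a byproduct, the same support asymptotics that the paper records separately in the corollary following the proof. One small point worth making explicit in a write-up: the passage ``$\beta\to0$ forces $s\to\min\{1,1/|\lambda|\}$'' is really a compactness/continuity argument (any subsequential limit of $s$ in the closed interval must be the right endpoint, since $\beta$ is continuous and strictly positive on compact subintervals and blows up at $0$); the bijectivity from Remark~\ref{prop:ab} is what guarantees $s$ is well defined for each $\beta$.
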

\begin{rem} This result reflects the fact that GIG matrix generalizes the Wishart matrix for $\lambda \geq1$, but not for $\lambda <1$ (see \cite{Fer06} for GIG matrix and \cite{HP00} for the Wishart matrix). 
\end{rem}
\begin{proof}
	We will find the limit by calculating limits of the $R$-transform, since convergence of the $R$-transform implies weak convergence. Observe that from Remark \ref{rem:F_lambda_sym} we can consider only $\lambda\geq0$, however we decided to present all cases, as the consideration will give asymptotic behaviour of support of fGIG measure. In view of \eqref{eq:pol_fpar}, the only non-trivial part is  limits of $\beta\gamma$ when $\beta\to0$.
	Observe that if we define $F(a,b,\alpha,\beta,\lambda)$ by
	\begin{align*}
	\left(1-\lambda+\alpha\sqrt{ab}-\beta\frac{a+b}{2ab},1+\lambda+\frac{\beta}{\sqrt{ab}}-\alpha\frac{a+b}{2}\right)^T &=\left(f(a,b,\alpha,\beta,\lambda),g((a,b,\alpha,\beta,\lambda))\right)^T\\
	&=F(a,b,\alpha,\beta,\lambda). 
	\end{align*} 
	Then the solution to the system \eqref{eq1}, \eqref{eq2} are functions $(a(\alpha,\beta,\lambda),b(\alpha,\beta,\lambda))$, such that $F(a(\alpha,\beta,\lambda),b(\alpha,\beta,\lambda),\alpha,\beta,\lambda)=(0,0)$. To use Implicit Function Theorem, by calculating the Jacobian with respect to $(a,b)$, we observe that $a(\alpha,\beta,\lambda)$ and $b(\alpha,\beta,\lambda)$ are continuous (even differentiable) functions of $\alpha,\beta>0$ and $\lambda\in \mathbb{R}$. 
	
	\textbf{Case 1.} $\lambda>1$\\
	Observe if we take $\beta=0$ then a real solution $0<a<b$ for the system \eqref{eq1}, \eqref{eq2}
	\begin{align}\label{Case1}
	1-\lambda+\alpha\sqrt{ab}&=0\\
	1+\lambda-\alpha\frac{a+b}{2}&=0
	\end{align}
	still exists. 
	Moreover, because at $\beta=0$ Jacobian is non-zero, Implicit Function Theorem says that solutions are continuous at $\beta=0$.
	Thus using \eqref{Case1} we get
	\begin{align*}
	\beta \gamma=\frac{\alpha^2 ab+\tfrac{\beta^2}{ab}-2\alpha\beta (\tfrac{a+b}{\sqrt{ab}}-1)-(\lambda-1)^2}{4}=\frac{\tfrac{\beta^2}{ab}-2\alpha\beta (\tfrac{a+b}{\sqrt{ab}}-1)}{4}.
	\end{align*}
	The above implies that $\beta\gamma\to 0$ when $\beta\to 0$ since $a,b$ have finite and non-zero limit when $\beta\to 0$, as explained above. 
	
	\textbf{Case 2.} $\lambda<-1$\\
	In that case we see that setting $\beta=0$ in \eqref{eq1} leads to an equation with no real solution for $(a,b)$. In this case the part $\beta\tfrac{a+b}{2ab}$ has non-zero limit when $\beta\to 0$. To be precise substitute $a=\beta a^\prime$ and $b=\beta b^\prime$ in \eqref{eq1}, \eqref{eq2}, and then we get
	\begin{align*}
	1-\lambda+\alpha\beta\sqrt{a^\prime b^\prime}-\frac{a^\prime+b^\prime}{2a^\prime b^\prime}=&0\\
	1+\lambda+\frac{1}{\sqrt{a^\prime b^\prime}}-\alpha\beta\frac{a^\prime+b^\prime}{2}=&0.
	\end{align*}
	The above system is equivalent to the system \eqref{eq1},\eqref{eq2} with $\alpha:=\alpha\beta$ and $\beta:=1$.
	If we set $\beta=0$ as in Case 1 we get
	\begin{align}
	1-\lambda-\frac{a^\prime+b^\prime}{2a^\prime b^\prime}=&0\\
	\label{Case2} 1+\lambda+\frac{1}{\sqrt{a^\prime b^\prime}}=&0.
	\end{align}
	The above system has solution $0<a^\prime<b^\prime$ for $\lambda<-1$.  Calculating the Jacobian we see that it is non-zero at $\beta=0$, so Implicit Function Theorem implies that $a^\prime$ and $b^\prime$ are continuous functions at $\beta=0$ in the case $\lambda<-1$. \\
	This implies that in the case $\lambda<-1$ the solutions of \eqref{eq1},\eqref{eq2} are  $a(\beta)= \beta a^\prime+o(\beta)$ and $b(\beta)= \beta b^\prime +o(\beta)$. Thus we have
	\begin{align*}
	\lim_{\beta\to 0}\beta \gamma&=\frac{\alpha^2 ab+\tfrac{\beta^2}{ab}-2\alpha\beta (\tfrac{a+b}{\sqrt{ab}}-1)-(\lambda-1)^2}{4}\\&=\frac{\tfrac{1}{a^\prime b^\prime}-(\lambda-1)^2 }{4}=\frac{(\lambda+1)^2-(\lambda-1)^2 }{4}=\lambda,
	\end{align*}
	where in the equation one before the last we used \eqref{Case2}.
	
	\textbf{Case 3.} $|\lambda|< 1$\\
	Observe that neither \eqref{Case1} nor \eqref{Case2} has a real solution in the case $|\lambda|< 1$. This is because in this case asymptotically $a(\beta)=a^\prime \beta +o(\beta)$ and $b$ has a finite positive limit as $\beta\to0$. Similarly as in Case 2 let us substitute $a=\beta a^\prime $ in \eqref{eq1}, \eqref{eq2}, which gives
	\begin{align*}
	1-\lambda+\alpha\sqrt{\beta a^\prime b}-\frac{\beta a^\prime+b}{2a^\prime b}&=0\\
	1+\lambda+\frac{\sqrt{\beta}}{a^\prime b}-\alpha\frac{\beta a^\prime+b}{2}&=0.
	\end{align*}
If we set $\beta=0$ we get
	\begin{align}
	1-\lambda-\frac{1}{2a^\prime}&=0,\\
	1+\lambda-\alpha\frac{b}{2}&=0,
	\end{align}
	which obviously has positive solution $(a^\prime,b)$ when $|\lambda|<1$. As before the Jacobian is non-zero at $\beta=0$, so $a^\prime$ and $b$ are continuous at $\beta=0$.  \\
	Now we go back to the limit $\lim_{\beta\to 0}\beta\gamma$. We have $a(\beta)=\beta a^\prime+o(\beta)$, thus
	\begin{align*}
	\lim_{\beta\to 0}\beta \gamma=\lim_{\beta\to 0}\frac{\alpha^2 ab+\tfrac{\beta^2}{ab}-2\alpha\beta (\tfrac{a+b}{\sqrt{ab}}-1)-(\lambda-1)^2}{4}=-\frac{(\lambda-1)^2}{4}.
	\end{align*} 

	\textbf{Case 4.} $|\lambda|= 1$\\
	 An analysis similar to the above cases shows that in the case $\lambda=1$ we have $a(\beta)=a^\prime \beta^{2/3}+o(\beta^{2/3})$ and $b$ has positive limit when $\beta\to 0$. In the case $\lambda=-1$ one gets $a(\beta)=a^\prime \beta+o(\beta)$ and $b(\beta)=b^\prime\beta^{1/3}+o(\beta^{1/3})$ as $\beta\to 0$.

Thus we can calculate the limit of $f_{\alpha,\beta,\lambda}$ as $\beta\to 0$ \eqref{eq:pol_fpar},  
\begin{equation}
\lim_{\beta\downarrow0} f_{\alpha,\beta,\lambda}(z) = 
\begin{cases}
(\alpha+(\lambda-1)z)^2, & \lambda>1, \\
 \alpha^2+(\lambda^2-1)\alpha z, & |\lambda| \leq 1, \\
 (\alpha-(\lambda+1)z)^2,  & \lambda<-1. 
 \end{cases}
\end{equation}
The above allows us to calculate limiting $R$-transform and hence the Cauchy transform which implies \eqref{eq:limits}.  
\end{proof}

\begin{cor}
Considering the continuous dependence of roots on parameters shows the following asymptotic behaviour of the double root $\delta<0$ and the simple root $\eta \geq \alpha$. 
\begin{enumerate}[\rm(i)]
\item If $|\lambda|>1$ then 
$\delta\to\alpha/(1-|\lambda|)$ and $\eta\to  +\infty$ as $\beta \downarrow0$. 
\item If $|\lambda|<1$ then 
$\delta\to-\infty$ and $\eta \to \alpha/(1-\lambda^2)$ as $\beta \downarrow0$. 
\item If $\lambda=\pm1$ then 
$\delta\to-\infty$ and $\eta \to +\infty$ as $\beta \downarrow0$. 
\end{enumerate}
\end{cor}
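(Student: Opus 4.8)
The plan is to read off the limits of $\delta$ and $\eta$ directly from the factorization $f_{\alpha,\beta,\lambda}(z)=4\beta(z-\delta)^2(\eta-z)$ in \eqref{eq:pol_par}, combined with the limits of $f_{\alpha,\beta,\lambda}$ as $\beta\downarrow 0$ computed in the preceding proposition and the structural data $\delta<0$, $\eta\geq\alpha>0$, together with the identity $4\beta\eta\delta^2=\alpha^2$ obtained there by evaluating $f_{\alpha,\beta,\lambda}$ at $z=0$. The governing principle is the standard fact that the roots of a polynomial vary continuously with its coefficients \emph{as long as the degree is preserved}; here the leading ($z^3$) coefficient of $f_{\alpha,\beta,\lambda}$ equals $-4\beta$ and tends to $0$, so the cubic degenerates and some roots must escape to infinity. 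The elementary fact underpinning every case is that if $z_\beta$ is a root of $f_{\alpha,\beta,\lambda}$ and $z_\beta\to z_\ast\in\R$ along a sequence $\beta\downarrow 0$, then $z_\ast$ is a root of $\lim_{\beta\downarrow 0}f_{\alpha,\beta,\lambda}$, since the coefficients converge while $z_\beta$ stays bounded; conversely any root not accumulating at a root of the limit must diverge. The whole argument is then the bookkeeping of which of $\delta$ (a double root) and $\eta$ (the simple root) stays finite and which escapes, carried out by degree in the three regimes.

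For $|\lambda|>1$ the limiting polynomial is the nondegenerate perfect square $(\alpha+(\lambda-1)z)^2$ (for $\lambda>1$) or $(\alpha-(\lambda+1)z)^2$ (for $\lambda<-1$), of degree $2$ with a \emph{negative} double root. As $f$ has degree $3$ with leading coefficient $-4\beta\to 0$, exactly one root escapes while the remaining two converge to this double root. Since $\eta\geq\alpha>0$ cannot converge to a negative value, it must be $\eta$ that escapes, so $\eta\to+\infty$, while $\delta<0$ converges to the double root of the limiting square, namely $\alpha/(1-\lambda)$ when $\lambda>1$; in both signs this equals $\alpha/(1-|\lambda|)$, the case $\lambda<-1$ being identical (or reducible to $\lambda>1$ through the symmetry $f_{\alpha,\beta,\lambda}=f_{\alpha,\beta,-\lambda}$ of Remark \ref{rem:F_lambda_sym}).

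For $|\lambda|<1$ the limit $\alpha^2+(\lambda^2-1)\alpha z$ has degree $1$ with unique root $\alpha/(1-\lambda^2)>0$, so two roots of $f$ must escape and one converges to $\alpha/(1-\lambda^2)$. The only way a double root and a simple root distribute into ``two escaping, one finite'' is that the double root diverges, i.e. $\delta\to-\infty$ (consistent with $\delta<0$), while the simple root converges, i.e. $\eta\to\alpha/(1-\lambda^2)$; the identity $4\beta\eta\delta^2=\alpha^2$ confirms the rates are compatible. In the boundary case $\lambda=\pm1$ the limit is the nonzero constant $\alpha^2$, which has no roots at all, so every root of $f$ escapes, and the signs $\delta<0$, $\eta\geq\alpha>0$ force $\delta\to-\infty$ and $\eta\to+\infty$.

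The main obstacle is precisely the degeneration of the cubic: one cannot invoke continuity of the root map naively, and the care lies in correctly apportioning the double root and the simple root between a finite limit and infinity in each regime. The sign constraints $\delta<0\leq\alpha\leq\eta$ and the normalization $4\beta\eta\delta^2=\alpha^2$ are what make this apportionment unambiguous. A completely equivalent, more computational route would bypass this by substituting into the closed forms $\delta=-2(B+\lambda A)/(B(B-A))$ and $\eta=2B/(A(B-\lambda A))$ of \eqref{eq5} and \eqref{eq6} the leading-order asymptotics of $a,b$ (hence of $A,B$) established inside the proof of the preceding proposition; there the subtlety reappears as the vanishing of the factor $B-\lambda A$ when $|\lambda|>1$, which is exactly what sends $\eta$ to infinity.
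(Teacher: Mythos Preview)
Your argument is correct and is precisely the one the paper has in mind: the corollary in the paper carries no proof beyond the sentence ``considering the continuous dependence of roots on parameters,'' and you have filled in exactly that reasoning, using the limits of $f_{\alpha,\beta,\lambda}$ from the preceding proposition together with the sign constraints $\delta<0<\alpha\leq\eta$ and the relation $4\beta\eta\delta^2=\alpha^2$ to decide which root escapes to infinity in each regime. The alternative computational route you mention at the end --- substituting the asymptotics of $a,b$ from the proof of the proposition into the closed forms \eqref{eq5}, \eqref{eq6} --- is also valid and is the other natural way to make the corollary explicit.
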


\section{Regularity of fGIG distribution under free convolution} 
In this section we study in detail regularity properties of the fGIG distribution related to the operation of free additive convolution. In the next theorem we collect all the results proved in this section. The theorem contains several statements about free GIG distributions. Each subsection of the present section proves a part of the theorem.
\begin{thm}\label{thm:sec3}
	The following holds for the free GIG measure $\mu(\alpha,\beta,\lambda)$:
	\begin{enumerate}[$1^o$]
		\item It is freely infinitely divisible for any $\alpha,\beta>0$ and $\lambda\in\mathbb{R}.$
		\item The free Levy measure is of the form
		\begin{equation}\label{FLM}
		\tau_{\alpha,\beta,\lambda}(\dd x)=\max\{\lambda,0\} \delta_{1/\alpha}(\dd x) + \frac{(1-\delta x) \sqrt{\beta (1-\eta x)}}{\pi x^{3/2} (1-\alpha x)} 1_{(0,1/\eta)}(x)\, \dd x.
		\end{equation}
		\item It is free regular with zero drift for all $\alpha,\beta>0$ and $\lambda\in\mathbb{R}$.
		\item It is freely self-decomposable for $\lambda \leq -\frac{B^{\frac{3}{2}}}{A\sqrt{9B-8A}}.$
		\item It is unimodal.
	\end{enumerate}
\end{thm}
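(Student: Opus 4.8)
The plan is to work directly with the explicit density, exploiting the fact that the fGIG measure $\mu(\alpha,\beta,\lambda)$ is absolutely continuous with a density that is continuous on $[a,b]$, vanishes at the two endpoints, and is strictly positive in between. Writing the density as $p(x)=\frac{1}{2\pi}\,s(x)\,q(x)$ with $s(x)=\sqrt{(x-a)(b-x)}$ and $q(x)=\frac{\alpha}{x}+\frac{\beta}{\sqrt{ab}\,x^2}$, unimodality reduces to showing that $p$ is strictly increasing on an initial subinterval of $(a,b)$ and strictly decreasing afterwards, i.e.\ that $p'$ changes sign exactly once on $(a,b)$, from positive to negative. Since the mode will then lie strictly inside $(a,b)$ (because $p(a)=p(b)=0<p$ on the interior), this yields unimodality in the usual sense.

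First I would compute $p'$ on $(a,b)$. Using $2ss'=(a+b)-2x$ one finds $p'=\frac{1}{2\pi s}\,N$, where $N(x)=\bigl(\tfrac{a+b}{2}-x\bigr)q(x)+s(x)^2 q'(x)$; since $s>0$ on $(a,b)$, the sign of $p'$ equals the sign of $N$. Clearing denominators by multiplying by $x^3>0$ converts $N$ into a polynomial $P(x):=x^3N(x)$ of the same sign on $(a,b)$. The key structural observation, which I expect to carry the whole argument, is that the two cubic contributions (one from $(\tfrac{a+b}{2}-x)q$ and one from $s^2q'$) cancel, so that $P$ is in fact a quadratic,
\[
P(x)=\Bigl(\tfrac{\beta}{\sqrt{ab}}-\tfrac{a+b}{2}\alpha\Bigr)x^2+\Bigl(ab\,\alpha-\tfrac{3(a+b)}{2}\tfrac{\beta}{\sqrt{ab}}\Bigr)x+2\sqrt{ab}\,\beta.
\]
This drop in degree is what makes the root count trivial; everything else is soft.

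Next I would record the endpoint signs of $P$. Because $s(a)=s(b)=0$, one has $N(a)=\tfrac{b-a}{2}\,q(a)>0$ and $N(b)=\tfrac{a-b}{2}\,q(b)<0$, whence $P(a)>0>P(b)$ (using $x^3>0$ since $0<a<b$). A real polynomial of degree at most two that takes opposite signs at $a$ and $b$ cannot have both of its roots inside $(a,b)$: if it did, then $a$ and $b$ would both lie outside the segment between the roots and $P$ would take the same sign at both, a contradiction. By the intermediate value theorem $P$ has at least one root in $(a,b)$, so it has exactly one, at which it passes from $+$ to $-$. Therefore $p'>0$ then $p'<0$ on $(a,b)$, so $p$ is strictly increasing and then strictly decreasing with a single interior mode, which is unimodality.

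The only genuine obstacle is the bookkeeping behind the cancellation of the $x^3$ terms of $P$; once that cancellation is verified the root-counting is immediate and does not even require locating the mode. I would emphasize that this elementary route is available precisely because the fGIG density is given in closed form, so one avoids invoking the general (and substantially harder) theory connecting free self-decomposability to unimodality, which in any event would only settle the parameter range isolated in item $4^o$.
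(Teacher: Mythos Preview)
Your proposal addresses only item $5^o$ (unimodality); items $1^o$--$4^o$ are left untouched, so as a proof of the full theorem it is incomplete.

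For unimodality itself, your argument is correct and essentially identical to the paper's. The paper differentiates the density, writes the relevant numerator as
\[
x(a+b-2x)\Bigl(x\alpha+\tfrac{\beta}{\sqrt{ab}}\Bigr)-2(b-x)(x-a)\Bigl(x\alpha+\tfrac{2\beta}{\sqrt{ab}}\Bigr),
\]
notes that this is a quadratic polynomial (the $x^3$ terms cancel, exactly the cancellation you flag), observes that it is positive at $x=a$ and negative at $x=b$ ``from the shape of the density,'' and concludes there is a unique root in $(a,b)$. Your version is simply a more explicit rendering of the same computation, together with the (correct) reason why a quadratic taking opposite signs at the endpoints cannot have two roots in between; the paper leaves that last step implicit.
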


\subsection{Free infinite divisibility and free L\'evy measure}

As we mentioned before, having the operation of free convolution defined, it is natural to study infinite divisibility with respect to $\boxplus$. We say that $\mu$ is freely infinitely divisible if for any $n\geq 1$ there exists a probability measure $\mu_n$ such that \[\mu=\underbrace{\mu_n\boxplus\ldots\boxplus\mu_n}_{\text{$n$ times}}.\]
It turns out that free infinite divisibility of compactly supported measures can by described in terms of analytic properties of the $R$-transform. In particular it was proved in \cite[Theorem 4.3]{Voi86} that the free infinite divisibility is equivalent to the inequality $\im(r_{\alpha,\beta,\lambda}(z)) \leq0$ for all $z\in\C^-$.

As in the classical case, for freely infinitely divisible probability measures, one can represent its free cumulant transform with a L\'evy--Khintchine type formula.
For a probability measure $\mu$ on $\R$, the \emph{free cumulant transform} is defined by
\begin{equation}
\FC_\mu(z) = z r_\mu(z). 
\end{equation}
Then $\mu$ is FID if and only if $\FC_\mu$ can be analytically extended to $\C^-$ via the formula 
\begin{equation}\label{FLK2}
\FC_{\mu}(z)=\xi z+\zeta z^2+\int_{\R}\left( \frac{1}{1-z x}-1-z x \,1_{[-1,1] }(x) \right) \tau(\dd x) ,\qquad z\in \C^-, 
\end{equation}
where $\xi \in \R,$ $\zeta\geq 0$ and $\tau$ is a measure on $\R$ such that 
\begin{equation}
\tau (\{0\})=0,\qquad \int_{\R}\min \{1,x^2\}\tau (\dd x) <\infty.
\end{equation}
The triplet $(\xi,\zeta,\tau)$ is called the \emph{free characteristic triplet} of $\mu$, and $\tau$ is called the \emph{free L\'evy measure} of $\mu$. The formula \eqref{FLK2} is called the \emph{free L\'evy--Khintchine formula}.  

\begin{rem}
	The above form of free L\'evy--Khintchine formula was obtained by Barndorff-Nielsen and Thorbj{\o}rnsen \cite{BNT02b} and it has a probabilistic interpretation (see \cite{Sat13}). Another form was obtained by Bercovici and Voiculescu \cite{BV93}, which is more suitable for limit theorems. 
\end{rem}

In order to prove that all fGIG distributions are freely infinitely divisible we will use the following lemma.

\begin{lem}\label{lem:32}
	Let $f\colon (\mathbb C^{-} \cup \mathbb R) \setminus \{x_0\} \to \mathbb C$ be a continuous function, where $x_0\in\mathbb{R}$. Suppose that $f$ is analytic in $\mathbb{C}^{-}$, $f(z)\to 0$ uniformly with $z\to \infty$ and $\im (f(x))\leq 0$ for $x \in \mathbb R \setminus \{x_0\}$. Suppose moreover that $\im(f(z))\leq 0$ for $\im(z)\leq 0$ in neighbourhood of $x_0$ then $\im (f(z))\leq 0$ for all $z\in \mathbb{C}^{-}$. 
\end{lem}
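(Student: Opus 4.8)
The plan is to recognize this as a Phragmén--Lindelöf type statement for the harmonic function $u := \im f$, which is harmonic on the open lower half-plane $\C^-$ because it is the imaginary part of a function analytic there. The goal $\im(f(z))\le 0$ on $\C^-$ then becomes the assertion that $u\le 0$ throughout $\C^-$, and I would prove it by the maximum principle. The only two obstructions to a naive application are the boundary singularity at $x_0$ and the unboundedness of the domain, and the two nontrivial hypotheses are tailored to neutralize exactly these: the neighbourhood condition at $x_0$ lets me excise the singularity, while the uniform decay $f(z)\to 0$ as $z\to\infty$ supplies the control at infinity.

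First I would fix a neighbourhood $N$ of $x_0$ on which $\im(f(z))\le 0$ for every $z\in N\cap(\C^-\cup\R)$, and choose $\rho>0$ small enough that the closed disc $\overline{B(x_0,\rho)}$ lies in $N$. Set $D := \C^-\setminus\overline{B(x_0,\rho)}$. On the finite part of $\partial D$ one then has $u\le 0$ everywhere: on $\R\setminus B(x_0,\rho)$ this is the hypothesis $\im(f(x))\le 0$ for $x\ne x_0$, and on the half-circle $\partial B(x_0,\rho)\cap\C^-$ it is the neighbourhood condition. Thus the singular point has effectively been replaced by a piece of boundary on which the desired inequality already holds.

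Next I would handle infinity by a standard exhaustion. Fix $\ep>0$; by uniform decay there is $R_0$ with $|f(z)|<\ep$, hence $u(z)<\ep$, whenever $|z|\ge R_0$. For $R>R_0$ consider the bounded domain $D_R := D\cap B(0,R)$, on whose boundary $u\le\ep$: the contributions from $\R$ and from $\partial B(x_0,\rho)$ give $u\le 0\le\ep$ as above, while the outer arc $\partial B(0,R)\cap\C^-$ gives $u<\ep$. Since $u$ is harmonic in $D_R$ and continuous on the compact set $\overline{D_R}$ (note $x_0\notin\overline{D_R}$), the maximum principle yields $u\le\ep$ throughout $D_R$. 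Letting $R\to\infty$ gives $u\le\ep$ on $D$, and then $\ep\downarrow 0$ gives $u\le 0$ on $D$. Finally, on the leftover region $\C^-\cap B(x_0,\rho)\subseteq N$ the inequality $u\le 0$ holds directly by the neighbourhood hypothesis, so $u\le 0$ on all of $\C^-$, as claimed.

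The step I expect to require the most care is the passage to the unbounded domain: one must apply the maximum principle only on the bounded regions $D_R$ and ensure the outer-arc bound is genuinely uniform, which is exactly where the hypothesis that $f(z)\to 0$ \emph{uniformly} (rather than merely pointwise or along rays) is essential. The excision of $x_0$ is the other key idea, but it is immediate once the neighbourhood condition is invoked; everything else reduces to the classical maximum principle for harmonic functions.
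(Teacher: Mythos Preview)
Your argument is correct and is essentially the same as the paper's: both excise a small half-disc about $x_0$, bound $\im f$ by $\ep$ on the boundary of a large truncated half-disc using the uniform decay at infinity, apply the maximum principle to the harmonic function $\im f$, and then let $\ep\downarrow 0$. The only cosmetic difference is that you decouple the inner radius $\rho$ from the tolerance $\ep$ and take the limits $R\to\infty$ and $\ep\downarrow 0$ separately, whereas the paper ties them together into a single parameter; your version is arguably a bit cleaner, but the underlying idea is identical.
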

\begin{proof}
	Since $f$ is analytic the function $\im f$ is harmonic and thus satisfies the maximum principle. Fix $\ep>0$. Since $f(z)\to 0$ uniformly with $z\to \infty,$ let $R>0$ be such that $\im f(z)<\ep$. Consider a domain $D_\ep$ with the boundary 
	\[ \partial D_\ep=[-R,x_0-\ep] \cup \{x_0+\ep e^{\ii \theta}: \theta \in[-\pi,0]\} \cup[x_0+\ep,R]\cup\{R e^{\ii \theta}: \theta \in[-\pi,0]\}\] 
	Observe that on $\partial D_\ep$ $\im f(z)<\ep$ by assumptions, and hence by the maximum principle we have $\im f(z)<\ep$ on whole $D_\ep$. Letting $\ep\to 0$ we get that $\im f(z) \leq 0$ on $\mathbb{C}^{-}$. 
\end{proof}

Next we proceed with the proof of free infinite divisibility of fGIG distributions.
\begin{proof}[Proof of Theorem \ref{thm:sec3} $1^o$]
$ $

{\bf Case 1.} $\lambda<1$.

Observe that we have
\begin{equation}\label{hypo}
\im(r_{\alpha,\beta,\lambda}(x)) \leq 0, \qquad x\in\R\setminus\{\alpha\}. 
\end{equation}
From \eqref{R} we see that $\im(r_{\alpha,\beta,\lambda}(x))=0$ for $x \in(-\infty,\alpha) \cup (\alpha,\eta]$, and 
\begin{equation}\label{r}
\im(r_{\alpha,\beta,\lambda}(x)) = \frac{(x-\delta)\sqrt{\beta (x-\eta)}}{x (\alpha-x)}<0,\qquad x>\eta
\end{equation}
since $\eta>\alpha>0>\delta$.\\
Moreover observe that by \eqref{alpha} for $\ep>0$ small enough we have $\im(r_{\alpha,\beta,\lambda}(\alpha+\ep e^{i \theta}))<0$, for $\theta\in[-\pi,0]$. 
Now Lemma \ref{lem:32} implies that free GIG distribution if freely ID in the case $\lambda>0$. 

{\bf Case 2} $\lambda>0$.

In this case similar argument shows that $\mu(\alpha,\beta,\lambda)$ is FID. Moreover by $\eqref{alpha2}$ point $z=\alpha$ is a removable singularity and $r_{\alpha,\beta,\lambda}$ extends to a continuous function on $\C^-\cup\R$. Thus one does not need to take care of the behaviour around $z=\alpha$.

{\bf Case 3} $\lambda=0$.
  
For $\lambda=0$ one can adopt a similar argumentation using \eqref{alpha3}. It also follows from the fact that free GIG family $\mu(\alpha,\beta,\lambda)$ is weakly continuous with respect to $\lambda$. Since free infinite divisibility is preserved by weak limits, then the case $\lambda=0$ may be deduced from the previous two cases. 
\end{proof}

Next we will determine the free L\'evy measure of free GIG distribution $\mu(\alpha,\beta,\lambda)$. 
\begin{proof}[Proof of Theorem \ref{thm:sec3} $2^o$]
Let $(\xi_{\alpha,\beta,\lambda}, \zeta_{\alpha,\beta,\lambda},\tau_{\alpha,\beta,\lambda})$ be the free characteristic triplet of the free GIG distribution $\mu(\alpha,\beta,\lambda)$. By the Stieltjes inversion formula mentioned in Remark \ref{rem:Cauchy}, the absolutely continuous part of the free L\'evy measure has the density 
\begin{equation}
-\lim_{\ep\to0}\frac{1}{\pi x^2}\im(r_{\alpha,\beta,\lambda}(x^{-1}+\ii\ep)), \qquad x \neq 0, 
\end{equation}
atoms are at points $1/p~(p\neq0)$, such that the weight is given by  
\begin{equation}
\tau_{\alpha,\beta,\lambda}(\{1/p\})=\lim_{z\to p} (p - z) r_{\alpha,\beta,\lambda}(z),  
\end{equation}
is non-zero, where $z$ tends to $p$ non-tangentially from $\C^-$. In our case the free L\'evy measure does not have a singular continuous part since $r_{\alpha,\beta,\lambda}$ is continuous on $\C^-\cup\R \setminus\{\alpha\}$.  
Considering \eqref{alpha}--\eqref{alpha3} and \eqref{r} we obtain the free L\'evy measure 
\begin{equation}
\tau_{\alpha,\beta,\lambda}(\dd x)=\max\{\lambda,0\} \delta_{1/\alpha}(\dd x) + \frac{(1-\delta x) \sqrt{\beta (1-\eta x)}}{\pi x^{3/2} (1-\alpha x)} 1_{(0,1/\eta)}(x)\, \dd x.
\end{equation}
Recall that $\eta \geq \alpha >0>\delta$ holds, and $\eta=\alpha$ if and only if $\lambda=0$. 
The other two parameters $\xi_{\alpha,\beta,\lambda}$ and $\zeta_{\alpha,\beta,\lambda}$ in the free characteristic triplet will determined in Section \ref{sec:FR}. 
\end{proof}

\subsection{Free regularity}\label{sec:FR}
In this subsection we will deal with a property stronger than free infinite divisibility, so called free regularity.\\
Let $\mu$ be a FID distribution with the free characteristic triplet $(\xi,\zeta,\tau)$. 
When the semicircular part $\zeta$ is zero and the free L\'evy measure $\tau$ satisfies a stronger integrability property $\int_{\R}\min\{1,|x|\}\tau(\dd x) < \infty$,  then the free L\'evy-Khintchine representation reduces to 
\begin{equation} \label{FR}
\FC_\mu(z)=\xi' z+\int_{\R}\left( \frac{1}{1-z x}-1\right) \tau(\dd x) ,\qquad z\in \C^-,
\end{equation} 
where $\xi' =\xi -\int_{[-1,1]}x \,\tau(\dd x) \in \R$ is called a \emph{drift}. The distribution $\mu$ is said to be \emph{free regular} \cite{PAS12} if $\xi'\geq0$ and $\tau$ is supported on $(0,\infty)$. A probability measure $\mu$ on $\R$ is free regular if and only if the free convolution power $\mu^{\boxplus t}$ is supported on $[0,\infty)$ for every $t>0$, see \cite{AHS13}. Examples of free regular distributions include positive free stable distributions, free Poisson distributions and powers of free Poisson distributions \cite{Has16}. A general criterion in \cite[Theorem 4.6]{AHS13} shows that some boolean stable distributions \cite{AH14} and many probability distributions \cite{AHS13,Has14,AH16} are free regular. A recent result of Ejsmont and Lehner \cite[Proposition 4.13]{EL} provides a wide class of examples: given a nonnegative definite complex matrix $\{a_{ij}\}_{i,j=1}^n$ and free selfadjoint elements $X_1,\dots, X_n$ which have symmetric FID distributions, the polynomial $\sum_{i,j=1}^n a_{ij} X_i X_j$ has a free regular distribution with zero drift.

\begin{proof}[Proof of Theorem \ref{thm:sec3} $3^o$]
For the free GIG distributions, the semicircular part can be found by $\displaystyle\zeta_{\alpha,\beta,\lambda}= \lim_{z\to \infty} z^{-1} r_{\alpha,\beta,\lambda}(z)=0$.  
The free L\'evy measure \eqref{FLM} satisfies
\begin{equation}
\supp(\tau_{\alpha,\beta,\lambda}) \subset (0,\infty), \qquad \int_0^\infty \min\{1,x\} \tau_{\alpha,\beta,\lambda}(\dd x)<\infty
\end{equation}
and so we have the reduced formula \eqref{FR}. The drift is given by $\displaystyle \xi_{\alpha,\beta,\lambda}'=\lim_{u \to -\infty} r_{\alpha,\beta,\lambda}(u)=0$.
\end{proof}

\subsection{Free selfdecomposability}
Classical GIG distribution is selfdecomposable \cite{Hal79,SS79} (more strongly, hyperbolically completely monotone \cite[p.\ 74]{Bon92}), and hence it is natural to ask whether free GIG distribution is freely selfdecomposable. \\
A distribution $\mu$ is said to be \emph{freely selfdecomposable} (FSD) \cite{BNT02a} if  for any $c\in(0,1)$ there exists a probability measure $\mu_c$ such that $\mu= (D_c\mu) \boxplus \mu_c $, where $D_c\mu$ is the dilation of $\mu$, namely $(D_c\mu)(B)=\mu(c^{-1}B)$ for Borel sets $B \subset \R$. A distribution is FSD if and only if it is FID and its free L\'evy measure is of the form 
\begin{equation}\label{SD Levy}
\frac{k(x)}{|x|}\, \dd x, 
\end{equation}
where $k\colon \R \to [0,\infty)$ is non-decreasing on $(-\infty,0)$ and non-increasing on $(0,\infty)$. 
Unlike the free regular distributions, there are only a few known examples of FSD distributions: the free stable distributions, some free Meixner distributions, the classical normal distributions and a few other distributions (see \cite[Example 1.2, Corollary 3.4]{HST}). The free Poisson distribution is not FSD.  
\begin{proof}[Proof of Theorem \ref{thm:sec3} $4^o$]
 In view of \eqref{FLM}, the free GIG distribution $\mu(\alpha,\beta,\lambda)$ is not FSD if $\lambda > 0$. Suppose $\lambda\leq0$, then $\mu(\alpha,\beta,\lambda)$ is FSD if and only if the function 
\begin{equation}
k_{\alpha,\beta,\lambda}(x)=\frac{(1-\delta x) \sqrt{\beta (1-\eta x)}}{\pi \sqrt{x} (1-\alpha x)} 
\end{equation}
is non-increasing on $(0,1/\eta)$. The derivative is 
\begin{equation}
k_{\alpha,\beta,\lambda}'(x) = - \frac{\sqrt{\beta} [1+(\delta-3\alpha)x +(2 \alpha \eta -2\eta\delta + \alpha \delta )x^2]}{2\pi x^{3/2} (1-\alpha x)^2 \sqrt{1-\eta x}}. 
\end{equation}
Hence FSD is equivalent to 
\begin{equation}
g(x):=1+(\delta-3\alpha)x +(2 \alpha \eta -2\eta\delta + \alpha \delta )x^2  \geq 0,\qquad 0\leq x \leq 1/\eta. 
\end{equation}
Using $\eta \geq\alpha >0>\delta$, one can show that $2 \alpha \eta -2\eta\delta + \alpha \delta >0$, a straightforward calculation shows that the function $g$ takes a minimum at a point in $(0,1/\eta)$. Thus FSD is equivalent to 
\begin{equation}
D:=(\delta-3\alpha)^2 - 4 (2 \alpha \eta -2\eta\delta + \alpha \delta ) \leq 0. 
\end{equation}
In order to determine when the above inequality holds, it is convenient to switch to parameters $A,B$ defined by \eqref{eq:AB}. 
Using formulas derived in Section \ref{sec:form} we obtain
\begin{equation}
D= \frac{4(B+\lambda A)(8\lambda^2 A^3 -9\lambda^2 A^2 B +B^3)}{A^2  B (A-B)^2 (B-\lambda A)}. 
\end{equation}
Calculating $\lambda$ for which $D$ is non-positive we obtain that \[\lambda\leq-\frac{B^{\frac{3}{2}}}{A\sqrt{9B-8A}}.\]
\end{proof}
\begin{cor} One can easily find that the maximum of the function $-\frac{B^{\frac{3}{2}}}{A\sqrt{9B-8A}}$ over $A,B\geq 0$ equals  $-\frac{4}{9}\sqrt{3}$.
Thus the set of parameters $(A,B)$ that give FSD distributions is nonempty if and only if $\lambda \leq -\frac{4}{9}\sqrt{3}$.

In the critical case $\lambda = -\frac{4}{9}\sqrt{3}$ only the pairs $(A, \frac{4}{3}A), A>0$ give FSD distributions. If one puts $A=12 t, B= 16 t$ then $a=(2-\sqrt{3})^2t,b=(2+\sqrt{3})^2t$, $\alpha = \frac{3-\sqrt{3}}{18 t}$, $\beta = \frac{3+\sqrt{3}}{18}t, \delta= -\frac{3-\sqrt{3}}{6t}=- 2\eta$. 
One can easily show that $\mu(\alpha,\beta,-1)$ is FSD if and only if $(0<A<)~B\leq \frac{-1+\sqrt{33}}{2} A$. 

Finally note that the above result is in contrast to the fact that classical GIG distributions are all selfdecomposable. 
\end{cor}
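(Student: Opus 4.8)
The plan is to reduce everything to a one-variable analysis of the threshold function appearing in Theorem~\ref{thm:sec3}~$4^o$. Write $h(A,B)=\frac{B^{3/2}}{A\sqrt{9B-8A}}$, so that $\mu(\alpha,\beta,\lambda)$ with $\lambda\le0$ is FSD precisely when $\lambda\le-h(A,B)$. The key observation is that $h$ is homogeneous of degree $0$ in $(A,B)$: numerator and denominator both scale like (length)$^{3/2}$. Hence $h$ depends only on the ratio $t:=B/A$, and since the admissible region \eqref{eq:ABineq} forces $A<B$ (so $t>1$ and in particular $9B-8A>0$), I would set $\psi(t)=\frac{t^{3/2}}{\sqrt{9t-8}}$ and study $\psi$ on $(1,\infty)$.

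First I would minimize $\psi$. Logarithmic differentiation gives $\frac{\psi'}{\psi}=\frac{3}{2t}-\frac{9}{2(9t-8)}$, which vanishes exactly at $t=4/3$; since $\psi(t)\to1$ as $t\to1^+$ and $\psi(t)\sim t/3\to\infty$ as $t\to\infty$, this interior critical point is the global minimum, with value $\psi(4/3)=\frac{(4/3)^{3/2}}{\sqrt4}=\frac{4}{9}\sqrt3$. Therefore $\min h=\frac49\sqrt3$ and $\max(-h)=-\frac49\sqrt3$, which is the first assertion. For the nonemptiness claim I would note that, for fixed $\lambda\le0$, an FSD member $\mu(\alpha,\beta,\lambda)$ exists iff $\lambda\le\max_{(A,B)}(-h(A,B))=-\frac49\sqrt3$; here one must check that the maximizing ratio $t=4/3$ is genuinely admissible for the relevant $\lambda$, but since these satisfy $|\lambda|\le\frac49\sqrt3<\frac43$ the constraint $\max\{1,|\lambda|\}A<B$ reduces to $A<B$ and is met. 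Because $t=4/3$ is the \emph{unique} minimizer, in the critical case $\lambda=-\frac49\sqrt3$ equality $\lambda=-h(A,B)$ forces $B=\frac43A$, giving the stated family $(A,\frac43A)$; the explicit values of $a,b,\alpha,\beta,\delta,\eta$ for $A=12t,B=16t$ are then just substitutions into the formulas of Section~\ref{sec:form} and the Remarks of Section~2, e.g.\ $a=\big(\tfrac{\sqrt B-\sqrt A}{2}\big)^2=t(2-\sqrt3)^2$, and I would record them without further comment.

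For the case $\lambda=-1$ I would instead keep both variables. The FSD condition $-1\le-h(A,B)$ reads $B^{3/2}\le A\sqrt{9B-8A}$; squaring (legitimate since both sides are positive on the admissible region) yields the polynomial inequality $B^3-9A^2B+8A^3\le0$. Writing $s=B/A$ this is $A^3(s^3-9s+8)\le0$, and since $s=1$ is an obvious root one factors $s^3-9s+8=(s-1)(s^2+s-8)$. As the admissible region for $|\lambda|=1$ gives $s>1$, the factor $s-1$ is positive, so the inequality is equivalent to $s^2+s-8\le0$, i.e.\ $s\le\frac{-1+\sqrt{33}}{2}$, which is exactly $B\le\frac{-1+\sqrt{33}}{2}A$. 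The final sentence about classical GIG is a contrast requiring no proof.

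I do not expect a serious obstacle here: the computation is elementary once homogeneity is used to pass to one variable. The only points demanding care are bookkeeping rather than depth — verifying that the global minimum of $\psi$ over $(1,\infty)$ is the interior critical point (and not approached at the boundary), and handling the mild circularity in the nonemptiness statement, where the admissible region for $(A,B)$ itself depends on $\lambda$ through \eqref{eq:ABineq}. Confirming that the optimal ratio $t=4/3$ lies in the admissible region for every $\lambda$ in the claimed range is what makes the clean ``iff'' valid.
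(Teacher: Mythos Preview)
The paper states this corollary without proof, so there is nothing to compare against methodologically; your reduction via the degree-$0$ homogeneity of $h(A,B)=B^{3/2}/\bigl(A\sqrt{9B-8A}\bigr)$ to the one-variable function $\psi(t)=t^{3/2}/\sqrt{9t-8}$ is the natural route, and your treatment of the maximization, the critical case $B=\tfrac{4}{3}A$, and the $\lambda=-1$ factoring are all correct.

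There is one genuine slip in the nonemptiness direction. You write that ``these satisfy $|\lambda|\le\tfrac49\sqrt3<\tfrac43$'', but the $\lambda$ under consideration are those with $\lambda\le-\tfrac49\sqrt3$, hence $|\lambda|\ge\tfrac49\sqrt3$, and $|\lambda|$ can be arbitrarily large. In particular, once $|\lambda|\ge\tfrac43$ the admissibility condition \eqref{eq:ABineq} becomes $B>|\lambda|A$, so the ratio $t=4/3$ is \emph{not} admissible and your chosen witness fails. The repair is easy and fits your framework: for every $s>1$ one has $\psi(s)<s$ (this is equivalent to $s<9s-8$), so for $|\lambda|>1$ any $t$ slightly above $|\lambda|$ is admissible and satisfies $\psi(t)<|\lambda|=-\lambda$, producing an FSD pair. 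With this extra line the ``iff'' goes through; without it, the argument as written covers only $\tfrac49\sqrt3\le|\lambda|<\tfrac43$. You correctly flag this circularity in your final paragraph, but the resolution you offer earlier has the inequality pointing the wrong way.
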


\subsection{Unimodality}
Since relations of free infinite divisibility and free self decomposability were studied in the literature, we decided to determine whether measures from the free GIG family are unimodal.

A measure $\mu$ is said to be \emph{unimodal} if for some $c\in\R$
\begin{equation}\label{UM}
\mu(\dd x)=\mu(\{c\})\delta_c(\dd x)+f(x)\, \dd x, 
\end{equation}
where $f\colon\R\to[0,\infty)$ is non-decreasing on $(-\infty,c)$ and non-increasing on $(c,\infty)$. In this case $c$ is called the \emph{mode}. 
Hasebe and Thorbj{\o}rnsen \cite{HT16} proved that FSD distributions are unimodal. Since some free GIG distributions are not FSD, the result from \cite{HT16} does not apply. However it turns out that free GIG measures are unimodal.
\begin{proof}[Proof of Theorem \ref{thm:sec3} $5^o$]
Calculating the derivative of the density of $\mu(\alpha,\beta,\lambda)$ one obtains
\begin{equation}
\frac{ x (a + b - 2 x)(x \alpha + \tfrac{\beta}{\sqrt{a b}}) - 
2 (b - x) (x-a) (x \alpha + \tfrac{2 \beta}{\sqrt{
a b}})}{2 x^3 \sqrt{(b - x) (x-a)}}
\end{equation}
Denoting by $f(x)$ the quadratic polynomial in the numerator, one can easily see from the shape of the density that $f(a)>0>f(b)$ and hence the derivative vanishes at a unique point in $(a,b)$ (since $f$ is quadratic).
\end{proof}

\section{Characterizations the free GIG distribution}

In this section we show that the fGIG distribution can be characterized similarly as classical GIG distribution. In \cite{Szp17} fGIG was characterized in terms of free independence property, the classical probability analogue of this result characterizes classical GIG distribution. In this section we find two more instances where such analogy holds true, one is a characterization by some distributional properties related with continued fractions, the other is maximization of free entropy.

\subsection{Continued fraction characterization}
In this section we study a characterization of fGIG distribution which is analogous to the characterization of GIG distribution proved in \cite{LS83}. Our strategy is different from the one used in \cite{LS83}. We will not deal with continued fractions, but we will take advantage of subordination for free convolutions, which allows us to prove the simpler version of ''continued fraction'' characterization of fGIG distribution.

\begin{thm}\label{thm:char}
Let $Y$ have the free Poisson distribution $\nu(1/\alpha,\lambda)$ and let $X$ be free from $Y$, where $\alpha,\lambda>0$ and $X>0$, then we have
\begin{align}\label{eq:distr_char}
X\stackrel{d}{=}\left(X+Y\right)^{-1}
\end{align}
if and only if $X$ has free GIG distribution $\mu(\alpha,\alpha,-\lambda)$.
\end{thm}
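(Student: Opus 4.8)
The plan is to treat the two implications very asymmetrically: the forward direction is essentially already established, so the entire argument reduces to a \emph{uniqueness} statement for the converse, which I would extract by combining the involution $t\mapsto 1/t$ on Cauchy transforms with Biane's subordination for $\boxplus$.

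\emph{Sufficiency.} This is exactly Remark \ref{rem:prop}$1^o$: if $X\sim\mu(\alpha,\alpha,-\lambda)$ and $Y\sim\nu(1/\alpha,\lambda)$ are free, then Proposition \ref{GIGPoissConv} gives $X+Y\sim\mu(\alpha,\alpha,\lambda)$ and Proposition \ref{GIGInv} then gives $(X+Y)^{-1}\sim\mu(\alpha,\alpha,-\lambda)$, which is \eqref{eq:distr_char}. In particular \eqref{eq:distr_char} has at least one solution, so proving necessity amounts to proving that this solution is the only one.

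\emph{Reformulation and subordination.} Let $\mu$ be the law of $X$, compactly supported in $(0,\infty)$ and bounded away from $0$ since $X$ is a positive invertible element; put $\nu=\nu(1/\alpha,\lambda)$ and $\rho=\mu\boxplus\nu$ for the law of $X+Y$. For a measure on $(0,\infty)$ the pushforward under $t\mapsto1/t$ satisfies $G_{\rho^{-1}}(z)=\tfrac1z-\tfrac1{z^2}G_\rho(1/z)$, so \eqref{eq:distr_char} is equivalent to $G_\mu(z)=\tfrac1z-\tfrac1{z^2}G_\rho(1/z)$ on $\C^+$, i.e. $G_\rho(1/z)=z-z^2G_\mu(z)$. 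Next I would invoke subordination: there is an analytic $\omega\colon\C^+\to\C^+$ with $G_\rho(w)=G_\mu(\omega(w))$, and because the free Poisson $R$-transform is $R_\nu(\zeta)=\lambda/(\alpha-\zeta)$ (Remark \ref{rem:freePoisson}$2^o$ with $\gamma=1/\alpha$), the subordination function is explicit, $\omega(w)=w-\lambda/(\alpha-G_\rho(w))$; one checks directly that this maps $\C^+$ into $\C^+$. Evaluating both relations at $1/z$ produces a single self-consistent equation for $G_\mu$,
\begin{equation*}
G_\mu\!\left(\frac1z-\frac{\lambda}{\alpha-z+z^{2}G_\mu(z)}\right)=z-z^{2}G_\mu(z),
\end{equation*}
and Propositions \ref{GIGPoissConv}, \ref{GIGInv} show that $G_{\mu(\alpha,\alpha,-\lambda)}$ solves it.

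\emph{Uniqueness and the main obstacle.} The genuinely hard step is to show this equation has only one solution among Cauchy transforms of probability measures on $(0,\infty)$. The equation is recursive — it relates $G_\mu$ at one point to its value at another point that itself depends on $G_\mu$ — which is precisely the continued-fraction phenomenon behind \cite{LS83}; the purpose of subordination is to settle uniqueness \emph{without} unfolding the fraction. Concretely I would restrict to the negative real axis, where $w\mapsto\omega(w)$ and $w\mapsto 1/\omega(w)$ are honest self-maps of $(-\infty,0)$ and every quantity is real and monotone, and argue that the analytic self-map of $\C^+$ encoding the two relations is a strict contraction for the hyperbolic metric (Denjoy--Wolff / Earle--Hamilton, as is standard for subordination maps), hence has at most one fixed point; uniqueness on $(-\infty,0)$ then propagates to $\C^+$ by analytic continuation and forces $\mu=\mu(\alpha,\alpha,-\lambda)$. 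The difficulties I anticipate are exactly here: verifying that the relevant map is well defined and \emph{strictly} (not merely non-) contractive, tracking the correct branches of $\omega$ and $G_\rho$ near the spectra, and using the boundary normalization ($G_\mu(z)\sim1/z$ at $\infty$, $G_\mu(\C^+)\subset\C^-$, support in $(0,\infty)$) to select the physical solution. Should the contraction estimate prove delicate, an alternative finish is to take boundary values in the self-consistent equation and show that $G_\mu$ satisfies a quadratic over the rationals, whence $\mu$ must carry an fGIG-type square-root density on an interval, after which the normalization together with the two edge conditions recovers the parameters $(\alpha,\alpha,-\lambda)$ uniquely.
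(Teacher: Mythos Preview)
Your setup --- sufficiency via Remark \ref{rem:prop}, then subordination plus the inversion identity to reduce to a single functional equation for $G_\mu$ --- is exactly what the paper does. The divergence, and the genuine gap, is in the uniqueness step.

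The Denjoy--Wolff/Earle--Hamilton sketch does not yet encode a well-defined object to which those theorems apply. In the standard subordination story, the two measures are \emph{given} and the contraction is a self-map of $\C^+$ (parametrized by $z$) whose fixed point is $\omega(z)$; here the unknown is the measure $\mu$ itself, so there is no canonical self-map of $\C^+$ whose unique fixed point would pin down $G_\mu$. The functional equation you wrote relates $G_\mu$ at two \emph{different} points, the inner one depending on $G_\mu$ itself, so neither a pointwise fixed-point argument nor a contraction on a function space is set up. Your fallback (``$G_\mu$ satisfies a quadratic over the rationals'') presupposes an algebraicity you have not derived: nothing in the equation forces a square-root density a priori.

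The paper's route is in the same spirit as your ``restrict to the negative axis'' remark but is concrete. Writing $M(z)=G_X(1/z)$ and $N(z)=\big(\tfrac{\lambda}{G_{X^{-1}}(z)-\alpha}+z\big)^{-1}$, the equation becomes $-M(z)+z=z^2M(N(z))$. A continuity argument on $(-\infty,0)$ produces a unique $c\in(-1,0)$ with $N(c)=c$; this is the fixed point you were groping for, but it is a fixed point of $N$ (which depends on the unknown $M$), not of an a priori map. Expanding at $c$ and matching Taylor coefficients, one shows successively that $c$ and $\alpha_0=M(c)$ are determined by $(\alpha,\lambda)$ via a quartic in $c$; that $\alpha_1=M'(c)$ solves a quadratic whose correct root is singled out by the Schwarz inequality $\big(\int(1-cx)^{-1}d\mu\big)^2\le\int(1-cx)^{-2}d\mu$; and then, crucially, that for $n\ge2$ the coefficient of $\alpha_n$ in the $n$-th differentiated equation is $1+c^2\beta_1^{\,n}+c^2p\alpha_1$ with $\beta_1=N'(c)$, which is shown to be strictly positive using the bound $-1\le\beta_1\le -c^2$ (again via Schwarz). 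That non-degeneracy is the substitute for your hoped-for strict contraction, and it is what your proposal is missing.
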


\begin{rem}
	Observe that the "if" part of the above theorems is contained in the remark \ref{rem:prop}. We only have to show that if $\eqref{eq:distr_char}$ holds where $Y$ has free Poisson distribution $\nu(1/\alpha,\lambda)$, then $X$ has free GIG distribution.
\end{rem}

As mentioned above our proof of the above theorem uses subordination of free convolution. This property of free convolution was first observed by Voiculescu \cite{Voi93} and then generalized by Biane \cite{Bia98}. Let us shortly recall what we mean by subordination of free additive convolution.

\begin{rem}
Subordination of free convolution states that for probability measures
$\mu,\nu$, there exists an analytic function defined on $\mathbb{C}\setminus\mathbb{R}$ with the property $F(\overline{z})=\overline{F(z)}$ such that for  $z\in\mathbb{C}^+$ we have $\im F(z)>\im z$ and
\[G_{\mu\boxplus\nu}(z)=G_\mu(\omega(z)).\]

Now if we denote by $\omega_1$ and $\omega_2$ subordination functions such that $G_{\mu\boxplus\nu}=G_\mu(\omega_1)$ and $G_{\mu\boxplus\nu}=G_\nu(\omega_2)$, then $\omega_1(z)+\omega_2(z)=1/G_{\mu\boxplus\nu}(z)+z$.
\end{rem}

Next we proceed with the proof of Theorem \ref{thm:char} which is the main result of this section.

\begin{proof}[Proof of Theorem \ref{thm:char}]
	First note that \eqref{eq:distr_char} is equivalent to 
	\[\frac{1}{X}\stackrel{d}{=}X+Y,\]
	Which may be equivalently stated in terms of Cauchy transforms of both sides as
	\begin{align}\label{eqn:CharCauch}
	G_{X^{-1}}(z)=G_{X+Y}(z).
	\end{align}
	Subordination allows as to write the Cauchy transform of $X+Y$ in two ways
	\begin{align}
	\label{Sub1}
	G_{X+Y}(z)&=G_X(\omega_X(z)),\\
	\label{Sub2}
	G_{X+Y}(z)&=G_Y(\omega_Y(z)).
	\end{align}
	Moreover $\omega_X$ and $\omega_Y$ satisfy
	\begin{align*}
	\omega_X(z)+\omega_Y(z)=1/G_{X+Y}(z)+z.
	\end{align*} 
	From the above we get 
	\begin{align}\label{subrel}
	\omega_X(z)=1/G_{X+Y}(z)+z-\omega_Y(z),
	\end{align} 
	this together with \eqref{eqn:CharCauch} and \eqref{Sub1} gives
	\begin{align*}
	G_{X^{-1}}(z)&=G_X\left(\frac{1}{G_{X^{-1}}}(z)+z-\omega_Y(z)\right).
	\end{align*}
	Since we know that $Y$ has free Poisson distribution $\nu(\lambda,1/\alpha)$ we can calculate $\omega_Y$ in terms of $G_{X^{-1}}$ using \eqref{Sub2}. To do this one has to use the identity $G_Z^{\langle -1\rangle}(z)=r_Z(z)+1/z$ for any self-adjoint random variable $Z$ and the form of the $R$-transform of free Poisson distribution recalled in Remark \ref{rem:freePoisson}. 
	\begin{align}\label{omegax}
	\omega_Y(z)=\frac{\lambda }{\alpha-G_{X^{-1}}(z) }+\frac{1}{G_{X^{-1}}(z)}
	\end{align}
	Now we can use \eqref{Sub1}, where we substitute $G_{X+Y}(z)=G_{X^{-1}}(z)$ to obtain 
	\begin{align}\label{FE}	G_{X^{-1}}(z)=G_{X}\left(\frac{\lambda }{G_{X^{-1}}(z)-\alpha }+z\right).
	\end{align}
	Next we observe that we have 
	\begin{align}\label{CauchInv}
	G_{X^{-1}}(z)=\frac{1}{z}\left(-\frac{1}{z}G_X\left(\frac{1}{z}\right)+1\right),
	\end{align} 
	which allows to transform \eqref{FE} to an equation for $G_X$. It is enough to show that this equation has a unique solution. Indeed from Remark \ref{rem:prop} we know that free GIG distribution $\mu(\alpha,\alpha,\lambda)$ has the desired property, which in particular means that for $X$ distributed $\mu(\alpha,\alpha,\lambda)$ equation \eqref{FE} is satisfied. Thus if there is a unique solution it has to be the Cauchy transform of the free GIG distribution.

	To prove uniqueness of the Cauchy transform of $X$, we will prove that coefficients of the expansion of $G_X$ at a special ``good'' point, are uniquely determined by $\alpha$ and $\lambda$.

	First we will determine the point at which we will expand the function. Observe that with our assumptions $G_{X^{-1}}$ is well defined on the negative half-line, moreover $G_{X^{-1}}(x)<0$ for any $x<0$, and we have $G_{X^{-1}}(x)\to0$ with $x\to-\infty$.
On the other hand the function $f(x)=1/x-x$ is decreasing on the negative half-line, and negative for $x\in(-1,0)$. Thus there exist a unique point $c\in(-1,0)$ such that
\begin{equation}\label{key eq}
\frac{1}{c} = \frac{\lambda}{G_{X^{-1}}(c)-\alpha}+c. 
\end{equation}
Let us denote
\begin{equation*}
M(z):= G_X\left(\frac{1}{z}\right) 
\end{equation*}
and 
\begin{equation}\label{eqn:funcN}
N(z):=\left(\frac{\lambda}{G_{X^{-1}}(z)-\alpha}+z\right)^{-1} = \frac{-z+ \alpha z^2 +M(z)}{-(1+\lambda)z^2 +\alpha z^3 + z M(z)},
\end{equation}
where the last equality follows from \eqref{CauchInv}.\\
One has $N(c)=c$, and our functional equation \eqref{FE} may be rewritten (with the help of \eqref{CauchInv}) as
\begin{equation}\label{FE2}
-M(z) +z = z^2 M(N(z)). 
\end{equation}
Functions $M$ and $N$ are analytic around any $x<0$. Consider the expansions
\begin{align*}
M(z) &= \sum_{n=0}^\infty \alpha_n (z-c)^n, \\
N(z) &= \sum_{n=0}^\infty \beta_n (z-c)^n.  
\end{align*}
Observe that $\beta_0=c$ since $N(c)=c$. Differentiating \eqref{eqn:funcN} we observe that any $\beta_n,\, n\geq1$ is a rational function of $\alpha, \lambda, c, \alpha_0,\alpha_1,\dots, \alpha_n$. Moreover any $\beta_n,\,n\geq 1 $ is a degree one polynomial in $\alpha_n$. We have
\begin{align}
\beta_n = \frac{-\lambda}{[\alpha_0 -(1+\lambda)c+\alpha c^2]^2} \alpha_n + R_n, 
\end{align}
where $R_n$ is a rational function of $n+3$ variables evaluated at $(\alpha,\lambda,c,\alpha_0,\alpha_1,\dots, \alpha_{n-1})$, which does not depend on the distribution of $X$. For example $\beta_1$ is given by 
\begin{equation}\label{eq:beta1}
\begin{split}
\beta_1
&=N'(c) =\left. \left(\frac{-z+ \alpha z^2 +M(z)}{-(1+\lambda)z^2 +\alpha z^3 + z M(z)}\right)' \right|_{z=c} \\
&=\frac{-\lambda c^2 \alpha_1 + c^2(-1-\lambda +2\alpha c -\alpha^2 c^2)+2c(1+\lambda-\alpha c)\alpha_0- \alpha_0^2 }{c^2[\alpha_0-(1+\lambda)c+\alpha c^2]^2}. 
\end{split}
\end{equation}

Next we investigate some properties of $c, \alpha_0$ and $\alpha_1$. Evaluating both sides of \eqref{FE2} at $z=c$ yields 
\begin{equation*}
-M(c)+c = c^2 M(N(c)) = c^2M(c), 
\end{equation*} 
since $M(c)=\alpha_0$ we get
\begin{equation}\label{eq1c}
\alpha_0=\frac{c}{1+c^2}. 
\end{equation}
Observe that $\alpha_0= M(c) = G_X(1/c)$ and $\alpha_1=M'(c)=-c^{-2} G_X'(1/c)$ hence we have  
\begin{equation*}
\frac{1}{1+c^2}= \int_{0}^\infty \frac{1}{1-c x} \dd\mu_X( x), \qquad \alpha_1 =  \int_{0}^\infty \frac{1}{(1-c x)^2} \dd\mu_X( x),
\end{equation*}
where $\mu_X$ is the distribution of $X$. Using the Schwarz inequality for the first estimate and a simple observation that $0\leq1/(1-cx) \leq1$ for $x>0$, for the latter estimate we obtain
\begin{equation}\label{eq:alpha1}
\frac{1}{(1+c^2)^{2}}=\left(\int_{0}^\infty \frac{1}{1-c x} \mu_X(\dd x)\right)^2 \leq \int_{0}^\infty \frac{1}{(1-c x)^2} \mu_X(\dd x)= \alpha_1 \leq \frac{1}{1+c^2}. 
\end{equation}
The equation \eqref{key eq} together with \eqref{CauchInv} gives 
\begin{equation}\label{eq2c}
\frac{1}{c} = \frac{\lambda c^2}{-\alpha_0 + c - \alpha c^2} +c. 
\end{equation}
Substituting \eqref{eq1c} to \eqref{eq2c} after simple calculations we get
\begin{equation}\label{eq:c}
\alpha c^4 - (1+\lambda)c^3+(1-\lambda)c -\alpha = 0. 
\end{equation} 

We start by showing that $\alpha_0$ is determined only by $\alpha$ and $\lambda$. We will show that $c$, which we showed before is a unique number, depends only on $\alpha$ and $\lambda$ and thus \eqref{eq1c} shows that $\alpha_0$ is determined by $\alpha$ and $\lambda$.

Since the polynomial $c^4 - (1+\lambda)c^3$ is non-negative for $c<0$ and has a root at $c=0$, and the polynomial $(\lambda-1)c +\alpha$ equals $\alpha>0$ at $c=0$ it follows that there is only one negative $c$, such that the two polynomials are equal and thus the number $c$ is uniquely determined by $(\alpha,\lambda)$. From \eqref{eq1c} we see that $\alpha_0$ is also uniquely determined by $(\alpha,\lambda)$.

Next we will prove that $\alpha_1$ only depends on $\alpha$ and $\lambda$. 
Differentiating \eqref{FE2} and evaluating at $z=c$ we obtain
\begin{equation}\label{eq3c}
1-\alpha_1 = 2 c \alpha_0 + c^2 \alpha_1 \beta_1.  
\end{equation}
Substituting $\alpha_0$ and $\lambda$ from the equations \eqref{eq1c} and \eqref{eq:c} we simplify \eqref{eq:beta1} and we get
\begin{equation*}
\beta_1 = \frac{(1-c^4)\alpha_1 -1+2c^2-\alpha c^3 -\alpha c^5}{c(\alpha-c+\alpha c^2)}
\end{equation*}
and then equation \eqref{eq3c} may be expressed in the form 
\begin{equation}\label{eq:alpha2}
 c(1+c^2)^2 \alpha_1^2 + (\alpha(1+c^2)^2-2c )(1+c^2)\alpha_1 -(\alpha-c + \alpha c^2) =0.  
\end{equation}
The above is a degree 2 polynomial in $\alpha_1$, denote this polynomial by $f$, we have then 
\begin{equation*}
f(0) <0,\qquad f\left(\frac{1}{1+c^2}\right)  = \alpha c^2 (1+c^2)>0. 
\end{equation*}
Where the first inequality follows from the fact that $c<0$.
Since the coefficient $c(1+c^2)^2$ is negative we conclude that $f$ has one root in the interval $(0,1/(1+c^2))$ and the other in $(1/(1+c^2),\infty)$. The inequality \eqref{eq:alpha1} implies that $\alpha_1$ is the smaller root of $f$, which is a function of $ \alpha$ and $c$ and hence of $\alpha$ and $\lambda$.

In order to prove that $\alpha_n$ depends only on $(\alpha,\lambda)$ for $n\geq2$, first we estimate  $\beta_1$. Note that \eqref{eq3c} and \eqref{eq1c} imply that 
\begin{equation}\label{eq4c}
\beta_1 = \frac{1-c^2}{\alpha_1 c^2(1+c^2)} -\frac{1}{c^2}.
\end{equation} 
Combining this with the inequality \eqref{eq:alpha1} we easily get that
\begin{equation}\label{eq:beta}
-1 \leq \beta_1 \leq -c^2. 
\end{equation}

Now we prove by induction on $n$ that $\alpha_n$ only depends on $\alpha$ and $\lambda$.  For $n\geq2$ differentiating $n$-times \eqref{FE2} and evaluating at $z=c$ we arrive at
\begin{equation}\label{eq5c}
-\alpha_n = c^2(\alpha_n \beta_1^n + \alpha_1 \beta_n) + Q_n, 
\end{equation}
where $Q_n$ is a universal polynomial (which means that the polynomial does not depend on the distribution of $X$) in $2n+1$ variables evaluated at $(\alpha,\lambda,c,\alpha_1,\dots, \alpha_{n-1}, \beta_1,\cdots, \beta_{n-1})$. According to the inductive hypothesis, the polynomials $R_n$ and $Q_n$ depend only on $\alpha$ and $\lambda$.  We also have that $\beta_n=p \alpha_n + R_n$, where 
\begin{equation*}
p :=  \frac{-\lambda}{[\alpha_0 -(1+\lambda)c+\alpha c^2]^2} = \frac{1-c^4}{c(\alpha-c+\alpha c^2)}. 
\end{equation*}
The last formula is obtained by substituting $\alpha_0$ and $\lambda$ from \eqref{eq1c} and \eqref{eq:c}. The equation \eqref{eq5c} then becomes
\begin{equation*}
(1+c^2\beta_1^n + c^2 p \alpha_1)\alpha_n + c^2 \alpha_1 R_n + Q_n=0. 
\end{equation*}
The inequalities \eqref{eq:alpha1} and \eqref{eq:beta} show that 
\begin{equation*}
\begin{split}
1+c^2\beta_1^n + c^2 p \alpha_1 
&\geq 1-c^2 +\frac{c^2(1-c^4)}{c(\alpha-c+\alpha c^2) (1+c^2)} =\frac{\alpha(1-c^4)}{\alpha-c+\alpha c^2}>0, 
\end{split}
\end{equation*}
thus $1+c^2\beta_1^n + c^2 p \alpha_1$ is non-zero. Therefore, the number $\alpha_n$ is uniquely determined by $\alpha$ and $\lambda$. 

Thus we have shown that, if a random variable $X>0$ satisfies the functional equation \eqref{FE} for fixed $\alpha>0$ and $\lambda>0$, then the point $c$ and all the coefficients $\alpha_0,\alpha_1,\alpha_2,\dots$ of the series expansion of $M(z)$ at $z=c$ are determined only by $\alpha$ and $\lambda$. By analytic continuation, the Cauchy transform $G_X$ is determined uniquely by $\alpha$ and $\lambda$, so there is only one distribution of $X$ for which this equation is satisfied.
\end{proof}

\subsection{Remarks on free entropy characterization} 
F\'eral \cite{Fer06} proved that fGIG $\mu(\alpha,\beta,\lambda)$ is a unique probability measure which maximizes the following free entropy functional with potential
\begin{align*}
I_{\alpha,\beta,\lambda}(\mu)=\int\!\!\!\int \log|x-y|\, \dd\mu(x) \dd\mu(y)-\int V_{\alpha,\beta,\lambda}(x)\, \dd\mu(x),   
\end{align*} 
among all the compactly supported probability measures $\mu$ on $(0,\infty)$, where $\alpha, \beta>0$ and $\lambda \in \R$ are fixed constants, and 
$$
V_{\alpha,\beta,\lambda}(x)=(1-\lambda) \log x+\alpha x+\frac{\beta}{x}. 
$$ 

Here we point out the classical analogue. The (classical) GIG distribution is the probability measure on $(0,\infty)$ with the density 
\begin{equation}\label{C-GIG}
\frac{(\alpha/\beta)^{\lambda/2}}{2K_\lambda(2\sqrt{\alpha\beta})} x^{\lambda-1} e^{-(\alpha x + \beta /x)}, \qquad \alpha,\beta>0, \lambda\in\R, 
\end{equation}
where $K_\lambda$ is the modified Bessel function of the second kind. Note that this density is proportional to $\exp(-V_{\alpha,\beta,\lambda}(x))$. 
Kawamura and Iwase \cite{KI03} proved that the GIG distribution is a unique probability measure which maximizes the classical entropy with the same potential 
$$
H_{\alpha,\beta,\lambda}(p) = - \int p(x) \log p(x)\, \dd x -\int V_{\alpha,\beta,\lambda}(x)p(x)\, \dd x
$$
among all the probability density functions $p$ on $(0,\infty)$. 
This statement is slightly different from the original one \cite[Theorem 2]{KI03}, and for the reader's convenience a short proof is given below. The proof is a  straightforward application of the Gibbs' inequality
\begin{equation}\label{Gibbs}
-\int p(x)\log p(x)\,\dd x \leq -\int p(x)\log q(x)\,\dd x,  
\end{equation}
for all probability density functions $p$ and $q$, say on $(0,\infty)$. Taking $q$ to be the density \eqref{C-GIG} of the classical GIG distribution and computing $\log q(x)$, we obtain the inequality 
\begin{equation}\label{C-entropy}
H_{\alpha,\beta,\lambda}(p) \leq -\log \frac{(\alpha/\beta)^{\lambda/2}}{2K_\lambda(2\sqrt{\alpha\beta})}. 
\end{equation}
Since the Gibbs inequality \eqref{Gibbs} becomes equality if and only if $p=q$, the equality in \eqref{C-entropy} holds if and only if $p=q$, as well.  

\begin{rem}
From the above observation, it is tempting to investigate the map
$$
C e^{-V(x)}\, \dd x \mapsto \text{~the maximizer $\mu_V$ of the free entropy functional $I_V$ with potential $V$},  
$$
where $C>0$ is a normalizing constant. Under some assumption on $V$, the free entropy functional $I_V$ is known to have a unique maximizer (see \cite{ST97}) and so the above map is well defined. Note that the density function $C e^{-V(x)}$ is the maximizer of the classical entropy functional with potential $V$, which follows from the same arguments as above. This map sends Gaussian to semicircle, gamma to free Poisson (when $\lambda \geq1$), and GIG to free GIG. More examples can be found in \cite{ST97}. 
\end{rem}

\section*{Acknowledgement}
The authors would like to thank BIRS, Banff, Canada for hospitality during the workshop ``Analytic versus Combinatorial in Free Probability'' where we started to work on this project. TH was supported by JSPS Grant-in-Aid for Young Scientists (B) 15K17549. 
KSz was partially supported by the NCN (National Science
Center) grant 2016/21/B/ST1/00005.


\end{document}